\renewcommand{\mathcal}{\mathscr}
\theoremstyle{definition}
\newtheorem{ntn}{Notation}[section]
\newtheorem{dfn}[ntn]{Definition}
\theoremstyle{plain}
\newtheorem{lem}[ntn]{Lemma}
\newtheorem{prp}[ntn]{Proposition}
\newtheorem{thm}[ntn]{Theorem}
\newtheorem{cor}[ntn]{Corollary}
\theoremstyle{remark}
\newtheorem{rmk}[ntn]{Remark}
\numberwithin{equation}{section}
\newcommand{\ideal}[1]{{\left\langle#1\right\rangle}}
\newcommand{\xymat}{\SelectTips{cm}{}\xymatrix}
\newcommand{\into}{\hookrightarrow}
\newcommand{\onto}{\twoheadrightarrow}
\newcommand{\ol}{\overline}
\newcommand{\wh}{\widehat}
\newcommand{\wt}{\widetilde}
\newcommand{\A}{\mathcal{A}}
\newcommand{\C}{\mathcal{C}}
\newcommand{\F}{\mathcal{F}}
\newcommand{\I}{\mathcal{I}}
\newcommand{\J}{\mathcal{J}}
\renewcommand{\L}{\mathcal{L}}
\newcommand{\M}{\mathcal{M}}
\newcommand{\mm}{\mathfrak{m}}
\newcommand{\nn}{\mathfrak{n}}
\newcommand{\pp}{\mathfrak{p}}
\newcommand{\qq}{\mathfrak{q}}
\newcommand{\reg}{\mathrm{reg}}
\renewcommand{\O}{\mathcal{O}}
\newcommand{\Q}{\mathcal{Q}}
\DeclareMathOperator{\Ann}{Ann}
\DeclareMathOperator{\Ass}{Ass}
\DeclareMathOperator{\Bl}{Bl}
\DeclareMathOperator{\depth}{depth}
\DeclareMathOperator{\grade}{grade}
\DeclareMathOperator{\Hom}{Hom}
\DeclareMathOperator{\SHom}{{\mathcal Hom}}
\DeclareMathOperator{\Proj}{Proj}
\DeclareMathOperator{\rk}{rk}
\DeclareMathOperator{\Spec}{Spec}
\begin{document}

\title{Blow up of conductors}

\author[C.~Birghila]{Corina Birghila}
\address{C.~Birghila\\
Department of Statistics and Operations Research\\ 
University of Vienna\\
Oskar-Morgenstern-Platz 1\\
1090 Vienna\\
Austria}
\email{\href{mailto:corina.birghila@univie.ac.at}{corina.birghila@univie.ac.at}}


\author[M.~Schulze]{Mathias Schulze}
\address{M.~Schulze\\
Department of Mathematics\\
TU Kaiserslautern\\
67663 Kaiserslautern\\
Germany}
\email{\href{mailto:mschulze@mathematik.uni-kl.de}{mschulze@mathematik.uni-kl.de}}

\thanks{The research leading to these results has received funding from the People Programme (Marie Curie Actions) of the European Union's Seventh Framework Programme (FP7/2007-2013) under REA grant agreement n\textsuperscript{o} PCIG12-GA-2012-334355.}


\subjclass[2010]{Primary 14E05; Secondary 14M05}

\keywords{blow up, normalization, Gorenstein, canonical module, fractional ideal}

\begin{abstract}
We generalize results of P.M.H.~Wilson describing situations where the blow up of the conductor ideal of a scheme coincides with the normalization.
\end{abstract}

\maketitle

\section{Introduction}\label{53}

Blowup and normalization are fundamental operations in the study of varieties and singularities.
While normalization modifies the non-normal locus defined by the conductor ideal, blow up modifies the locus defined by any given ideal.
In typical cases the normalization is finite while blow ups are not.
It is therefore a particular situation that the blow up of the conductor ideal yields the normalization.
P.M.H.~Wilson described instances where this happens.
He considers irreducible projective varieties over an algebraically closed field and proves the following results (see \cite[Cor.~1.4, Thm.~2.7, Rem.~2.8]{Wil78}).

\begin{prp}[Wilson]
Given a curve $C$ with normalization $\wt C$ and with $C'$ the blow up of $C$ in its conductor ideal, then $C'=\wt C$.
\end{prp}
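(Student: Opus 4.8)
The plan is to construct mutually inverse $C$-morphisms between $\wt C$ and $C'$: one from the universal property of the blow up, the other from the universal property of the normalization, the substantive point being that $C'$ is already normal. As everything is compatible with restriction to affine opens of $C$ and with the finite morphism $\wt C\to C$, I would first reduce to the case $C=\Spec A$ with $A$ the coordinate ring (a one-dimensional Noetherian domain with function field $K$) and $\wt C=\Spec\ol A$, where $\ol A$ is the integral closure of $A$ in $K$, module-finite over $A$ since $C$ is a variety over a field. Let $\mathfrak{c}=(A:_{\ol A}\ol A)=\Ann_A(\ol A/A)$ be the conductor. Two properties of $\mathfrak{c}$ are needed: it is a nonzero ideal (clearing denominators in a finite set of module generators of $\ol A$ produces a nonzero element of $A$ in $\mathfrak{c}$), so that $V(\mathfrak{c})$ is the finite non-normal locus; and $\mathfrak{c}$ is an ideal of $\ol A$, not merely of $A$ --- immediate from the definition, and the one structural feature of the conductor that the argument really exploits.

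To produce $\phi\colon\wt C\to C'=\Bl_{\mathfrak{c}}C$, note that $\ol A$, being a one-dimensional normal domain, is regular, so the nonzero ideal $\mathfrak{c}\ol A$ is invertible; hence $\Bl_{\mathfrak{c}\ol A}\wt C=\wt C$, and by the universal property of the blow up the composite $\wt C\to C$ factors through a unique $C$-morphism $\phi\colon\wt C\to C'$. Since $\mathfrak{c}$ is the unit ideal away from $V(\mathfrak{c})$, both $\wt C$ and $C'$ map isomorphically to $C$ over the dense open $C\setminus V(\mathfrak{c})$; in particular $C'$ is integral of dimension one with function field $K$, and $\phi$ is birational.

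The crux is that $C'$ is normal. Let $x'\in C'$ be a closed point over $p\in V(\mathfrak{c})$ (over $C\setminus V(\mathfrak{c})$ there is nothing to prove). By construction of the blow up $\mathfrak{c}\O_{C'}$ is invertible, so $\mathfrak{c}\O_{C',x'}$ is a principal ideal of the local ring $\O_{C',x'}$; a Nakayama argument shows it is generated by one of the generators of $\mathfrak{c}_p$, say by $\gamma\in\mathfrak{c}_p$, a nonzero (hence regular) element of the domain $A$. Because $\gamma\in\mathfrak{c}$ and $\mathfrak{c}$ is an ideal of $\ol A$, we get $\gamma\ol A_p\subseteq\mathfrak{c}_p$, hence, inside $K$,
\[
\ol A_p\subseteq\gamma^{-1}\mathfrak{c}_p\subseteq\gamma^{-1}\bigl(\mathfrak{c}_p\O_{C',x'}\bigr)=\O_{C',x'}.
\]
Thus $\O_{C',x'}$ is a one-dimensional local ring squeezed between the discrete valuation ring $\ol A_p$ and its fraction field $K$; since a discrete valuation ring has no overrings in its fraction field except itself and the field, and $\O_{C',x'}$ is not a field, we conclude $\O_{C',x'}=\ol A_p$, which is regular. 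Hence $C'$ is normal.

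Finally, $C'$ is an integral normal scheme dominating $C$ with the same function field, so the universal property of the normalization furnishes a $C$-morphism $\psi\colon C'\to\wt C$. The composites $\psi\circ\phi$ and $\phi\circ\psi$ are $C$-endomorphisms of $\wt C$ and of $C'$ agreeing with the identity on a dense open, hence equal to the identity since $\wt C$ and $C'$ are reduced and separated; therefore $\phi$ is an isomorphism and $\wt C=C'$. I expect the normality of $C'$ to be the main obstacle: it is exactly there that one must use that the conductor is an ideal of the \emph{normalization}, which is what forces a single blow up of $\mathfrak{c}$ --- rather than an iterated sequence of blow ups, or a blow up of some other ideal --- to land precisely on $\wt C$. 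The background facts (finiteness of the normalization of a curve over a field, the equivalence of normal and regular in dimension one, triviality of blowing up an invertible ideal, and the two universal properties invoked) I would take for granted.
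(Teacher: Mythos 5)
Your proof is correct, up to one imprecision that is easily repaired: $\ol A_p=(A\setminus p)^{-1}\ol A$ is in general only a semilocal Dedekind (hence principal ideal) domain rather than a discrete valuation ring, since several points of $\wt C$ may lie over $p$; but every intermediate ring between a Dedekind domain and its fraction field is a localization and hence normal, which is all your normality claim for $\O_{C',x'}$ requires. Your route is, however, genuinely different from the paper's. The paper reproves Wilson's statement as the corollary to Theorem~\ref{22} in \S\ref{14}, and it never establishes normality of $C'$ at all: Lemma~\ref{13} observes that, because the conductor $\mathfrak{c}$ is an ideal of both $A$ and $\ol A$, the Rees algebras $A\oplus\mathfrak{c}\oplus\mathfrak{c}^2\oplus\cdots$ and $\ol A\oplus\mathfrak{c}\oplus\mathfrak{c}^2\oplus\cdots$ have the same $\Proj$, so $\Bl_{\mathfrak{c}}C=\Bl_{\mathfrak{c}\ol A}\wt C$ on the nose, and since $\ol A$ is locally a PID the ideal $\mathfrak{c}\ol A$ is invertible, whence the right-hand side is $\wt C$ by Remark~\ref{45}.\eqref{45a}. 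You instead build mutually inverse maps from the two universal properties and carry the real weight in a direct proof that $C'$ is normal, via the local principal generator $\gamma$ of $\mathfrak{c}\O_{C'}$ and the chain $\ol A_p\subset\gamma^{-1}\mathfrak{c}_p\subset\O_{C',x'}\subset K$. Both arguments pivot on the same structural fact --- that $\mathfrak{c}$ is an $\ol A$-ideal --- but the paper's Rees-algebra identification is shorter, applies to arbitrary reduced one-dimensional locally Noetherian schemes with finite normalization (no irreducibility, no base field), and is the one-dimensional instance of the dimension-free Theorem~\ref{22} (the blow up of the conductor recovers $Y$ iff the conductor is invertible as an $\O_Y$-ideal); your argument is more elementary and self-contained but is tied to dimension one through the normal-equals-regular step and the overring argument.
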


\begin{thm}[Wilson]
The blow up $V'$ of a hypersurface in its conductor ideal $\C$ is the same as the normalization $\wt V$ if and only if the dualizing sheaf $\omega_{\wt V}$ is invertible.
In particular, if $V$ is a surface, then $V'=\wt V$ if and only if $\wt V$ is Gorenstein.\qed
\end{thm}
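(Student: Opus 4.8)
The plan is to factor the equivalence into two, both local over $V$. Write $\pi\colon\wt V\to V$ for the normalization and let $\C=\SHom_{\O_V}(\pi_*\O_{\wt V},\O_V)$ be the conductor, which I regard both as an ideal sheaf on $V$ and -- being the largest $\O_V$-ideal stable under $\pi_*\O_{\wt V}$ -- as an ideal sheaf on $\wt V$. The only property of $V$ I will use is that a hypersurface is Gorenstein, so $\omega_V$ is invertible; accordingly $\omega_{\wt V}$ will mean $\SHom_{\O_V}(\pi_*\O_{\wt V},\omega_V)$, the canonical sheaf of $\wt V$, which is the dualizing sheaf whenever $\wt V$ is Cohen--Macaulay. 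Locally I work with $A=\O_{V,x}$, a Gorenstein local domain with $\omega_A\cong A$ and module-finite normalization $B\supseteq A$ inside the common fraction field $Q$, and with $\mathfrak c=(A:_AB)$, an ideal of both $A$ and $B$. I will establish: (i) $\omega_{\wt V}$ is invertible iff $\C$ is an invertible ideal sheaf on $\wt V$; and (ii) $V'=\wt V$ iff $\C$ is an invertible ideal sheaf on $\wt V$. Granting (i) and (ii), the first assertion follows. The surface case then follows too: a normal surface has Cohen--Macaulay local rings -- they are $S_2$ of dimension $\le2$ -- so there $\omega_{\wt V}$ is the dualizing sheaf and a Cohen--Macaulay local ring is Gorenstein exactly when its canonical module is invertible.

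For (i), since $\omega_V$ is locally free of rank $1$ I can pull it through the $\SHom$:
\[
\omega_{\wt V}=\SHom_{\O_V}(\pi_*\O_{\wt V},\omega_V)\cong\SHom_{\O_V}(\pi_*\O_{\wt V},\O_V)\otimes_{\O_V}\omega_V=\C\otimes_{\O_{\wt V}}\pi^*\omega_V,
\]
so $\omega_{\wt V}$ and $\C$ differ by the invertible twist $\pi^*\omega_V$, and either is invertible precisely when the other is. What underlies the identification $\SHom_{\O_V}(\pi_*\O_{\wt V},\O_V)=\C$ is the local fact $\Hom_A(B,A)\cong(A:_QB)=\mathfrak c$: an $A$-linear map $B\to A\subseteq Q$ is multiplication by its value at $1$, and since $1\in B$ any $q\in Q$ with $qB\subseteq A$ already lies in $A$.

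For (ii), the direction $V'=\wt V\Rightarrow\C$ invertible is quick: the exceptional locus of the blow up $V'\to V$ is a Cartier divisor, so $\C\cdot\O_{V'}$ is invertible, while $\C\cdot\O_{V'}=\C\cdot\O_{\wt V}=\C$ because $\C$ is already an $\O_{\wt V}$-ideal. For the converse I argue locally. If $\C$ is invertible on $\wt V$ then $\mathfrak c$ is an invertible ideal of $B$, and $B$ is semilocal (it is integral over the local ring $A$), so $\mathfrak c$ is principal, say $\mathfrak c=fB$ with $f$ a nonzerodivisor. Because $\mathfrak c$ is a $B$-ideal, the ideal product $\mathfrak c^n$ is the same whether formed in $A$ or in $B$, hence $\mathfrak c^n=f^nB$ for all $n\ge1$ and the Rees algebra is $R=\bigoplus_{n\ge0}\mathfrak c^nt^n=A\oplus fBt\oplus f^2Bt^2\oplus\cdots$. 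Now $R_2=(R_1)^2\subseteq(ft)R$, so the vanishing locus of $ft$ in $\Proj R$ is empty, while $R_{(ft)}=B$; therefore
\[
(\Bl_\C V)\times_V\Spec A=\Proj R=D_+(ft)=\Spec R_{(ft)}=\Spec B=\wt V\times_V\Spec A,
\]
compatibly with the structure maps. Blow up and normalization both commute with the localizations $\O_{V,x}$, and $V'$ is birational to $V$ (hence has normalization $\wt V$); so the canonical finite birational morphism $\wt V\to V'$ restricts to an isomorphism over every $\Spec\O_{V,x}$ and is therefore an isomorphism, i.e.\ $V'=\wt V$.

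I expect the crux to be the last paragraph: recognizing that once the conductor is principal over $B$ its $A$-ideal powers are $\mathfrak c^n=f^nB$, which collapses $\Proj R$ onto the single affine chart $\Spec B$. Two spots need the hypotheses watched: ``invertible $\Rightarrow$ principal'' uses that the normalization of a local ring is semilocal, and the twisting in (i) uses nothing about $V$ beyond invertibility of $\omega_V$ -- precisely what ``hypersurface'' supplies.
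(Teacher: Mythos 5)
Your proof is correct and follows essentially the same route as the paper's own generalization (Theorem~\ref{47}): your step (ii) is Theorem~\ref{22}, whose proof via Lemma~\ref{13} rests on exactly your observation that the powers $\mathfrak{c}^n$ agree whether computed over $A$ or over $B$ so that the two Rees algebras coincide, and your step (i) together with the surface remark is Proposition~\ref{12} specialized to Gorenstein $V$, where $f^!\omega_V=\SHom_{\O_V}(\pi_*\O_{\wt V},\omega_V)$ is the conductor up to an invertible twist. The paper itself only cites Wilson for this particular statement, so your explicit computation that $\Proj R$ collapses onto the single chart $D_+(ft)=\Spec B$ is a self-contained substitute for the universal-property argument used there.
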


Ragni Piene generalized the \enquote{if}-part of Wilson's theorem to reduced (not necessarily irreducible) algebraic schemes over an algebraically closed field replacing the normalization by a finite birational morphism (see \cite[Prop.~(2.9)]{Pie78}).

\begin{prp}[Piene]\label{3}
Let $f\colon Y\to X$ be a finite, birational morphism between Gorenstein
schemes. 
Then $f$ is isomorphic to the blow up of the conductor of $Y$ in $X$.
\end{prp}

In this note we further generalize Wilson's results dropping the base field.
We consider Cohen--Macaulay schemes equipped with a canonical (fractional) ideal sheaf.
Our main result Theorem~\ref{47} generalizes Piene's result and yields

\begin{thm}\label{48}
Let $X$ be a reduced Gorenstein Nagata scheme with Cohen--Macaulay normalization $\wt X\to X$.
Denote by $\Bl_{\C_{\wt X/X}} X$ the blow up of $X$ in the conductor ideal $\C_{\wt X/X}$.
Then $\wt X=\Bl_{\C_{\wt X/X}} X$ if and only if $\wt X$ is Gorenstein.\qed
\end{thm}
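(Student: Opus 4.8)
The plan is to deduce Theorem~\ref{48} from the main result, Theorem~\ref{47}, by equipping $X$ with the trivial canonical ideal sheaf and identifying the conductor with the canonical module of the normalization. Write $\pi\colon\wt X\to X$ for the normalization; it is finite because $X$ is Nagata, and the equality of $X$-schemes $\wt X=\Bl_{\C_{\wt X/X}}X$ can be tested on the local rings of $X$. So I would first reduce to the case $X=\Spec R$ with $R$ a reduced Gorenstein local ring and $\wt R$ its normalization, a finite, semilocal, Cohen--Macaulay $R$-algebra. Here the conductor $\C=\C_{\wt X/X}$ is, over $R$, the largest common ideal $\Hom_R(\wt R,R)$ of $R$ and $\wt R$; it contains a non-zerodivisor, and since $R$ is Gorenstein one has $\omega_R\cong R$, so $R$ is its own canonical module and $R$ carries the unit ideal as a canonical ideal sheaf. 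Thus Theorem~\ref{47} applies to it.

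The crucial point is the identification $\C\cong\omega_{\wt R}$ of $\wt R$-modules. Since $R\into\wt R$ is module-finite and $R$ is its own canonical module, the standard description of the canonical module of a finite Cohen--Macaulay algebra gives $\Hom_R(\wt R,R)\cong\omega_{\wt R}$ as soon as $\wt R$ is a maximal Cohen--Macaulay $R$-module, equivalently $\dim\wt R_\mm=\dim R$ for every maximal ideal $\mm$ of $\wt R$; and this holds because a Gorenstein ring is universally catenary, so that the normalization of the equidimensional local ring $R$ is again equidimensional. Granting this, the conductor, regarded as an ideal of $\wt R$, is a canonical ideal sheaf on the normalization.

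Theorem~\ref{47}, applied to $R$ with the unit ideal as canonical ideal, characterizes the equality $\wt X=\Bl_{\C_{\wt X/X}}X$ by a condition which, via the identification $\C\cong\omega_{\wt R}$, reads as the invertibility of $\omega_{\wt R}$ as an $\wt R$-module. Since $\wt R$ is normal --- hence satisfies $(R_1)$ and $(S_2)$ --- and is Cohen--Macaulay, it is Gorenstein exactly when $\omega_{\wt R}$ is locally free of rank one; chaining the equivalences gives $\wt X=\Bl_{\C_{\wt X/X}}X$ if and only if $\wt X$ is Gorenstein. I expect the genuine obstacle to be the duality identification $\Hom_R(\wt R,R)\cong\omega_{\wt R}$ in this generality --- both the needed equidimensionality of $\wt R$, obtained from universal catenarity, and the verification that this Hom-module really represents the canonical module over a merely Nagata base --- whereas matching the precise hypotheses and the exact form of the criterion of Theorem~\ref{47} with the present setup should be routine, if somewhat delicate.
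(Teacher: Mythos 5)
Your proposal is correct and follows essentially the same route as the paper: since $X$ is Gorenstein one takes $\omega_X=\O_X$ as canonical ideal, identifies $f^!\O_X$ with the conductor, and applies Theorem~\ref{47} to the finite bifractional normalization, the only real work being the equidimensionality along fibers of closed points (the paper's Proposition~\ref{54}) and the verification that $\SHom_{\O_X}(f_*\O_{\wt X},\O_X)$ behaves as a canonical module on the semilocal normalization (the paper's Lemma~\ref{16}, folded into Theorem~\ref{47}). The only divergence is cosmetic: you obtain equidimensionality from universal catenarity and the dimension formula, while the paper passes to the completion and uses catenarity of complete Cohen--Macaulay rings; both hinge on equidimensionality of local Cohen--Macaulay rings and the fact that minimal primes of $\wt R$ contract to minimal primes of $R$.
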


The above mentioned main result involves the blow up of fractional ideals. 
In preparation, we collect results on sheaves of rational functions and consider morphisms that allow for a pullback of fractional ideals.
Up to some extent we describe these concepts in relation with associated points of schemes.
A slightly different account of this topic is given in \cite[\S7.1]{Liu02}.
Although we work with sheaves on locally Noetherian schemes, our results are mostly local in the realm of commutative algebra.

The question under consideration also appears in work of Mitsuo Shinagawa \cite{Shi82} that aims for deducing properties of a scheme from its normalization.
Under the strong condition of normal flatness (that we do not pursue here) he proves 

\begin{thm}[M.~Shinagawa]
Let $X$ be a reduced Noetherian scheme with finite normalization $\wt X$, $Y$ be the closed subscheme defined by the conductor of $X$ in $\wt X$, and $X'$ the blow up of $X$ in $Y$.
If $X$ is normally flat along $Y$ and $Y$ is of pure codimension $1$ in $X$, then $X'$ is naturally isomorphic to $X$.\qed
\end{thm}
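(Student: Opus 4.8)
The plan is to reduce the assertion to the single statement that the conductor ideal sheaf $\C=\C_{\wt X/X}$ is an invertible $\O_X$-module. Indeed, blowing up an invertible ideal sheaf is an isomorphism, and conversely the blow up morphism $X'=\Bl_\C X\to X$ is an isomorphism exactly on the locus where $\C$ is locally principal generated by a non-zero-divisor; thus ``$X'$ is naturally isomorphic to $X$'' is equivalent to ``$\C$ is invertible''. Since invertibility is local and, by the finiteness of $\wt X\to X$ together with the discussion of fractional ideals and associated points recalled above, the formation of $\C$ commutes with localization, the theorem becomes a statement in local commutative algebra: for each $y\in Y$ I must show that the stalk $\C_y$ is a principal ideal of $\O_{X,y}$ generated by a non-zero-divisor.

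So fix $y\in Y$, write $(A,\mm)=(\O_{X,y},\mm_y)$ and $I=\C_y$, and set $B=A/I=\O_{Y,y}$. First I would record the two standing hypotheses. Because $X$ is reduced, the associated primes of $A$ are its minimal primes, and because $Y$ has pure codimension one in $X$, no minimal prime of $A$ contains $I$; hence $I$ avoids $\Ass(A)$, so $I$ contains a non-zero-divisor, $\grade I\ge 1$, and the height of $I$ equals one. Normal flatness of $X$ along $Y$ means precisely that $\mathrm{gr}_I A=\bigoplus_{n\ge 0}I^n/I^{n+1}$ is a flat, hence locally free, $B$-module; in particular the conormal module $I/I^2$ is locally free over $B$, and the exceptional divisor $E=\Proj(\mathrm{gr}_I A)$ is flat over $Y$. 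The residual task is to promote this flatness, in the presence of codimension one, to the statement that $\mathrm{gr}_I A$ is a polynomial ring $B[T]$ in one variable, equivalently that $I/I^2$ is free of rank one and $I$ is principal.

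This last step is the crux and the main obstacle. The plan is to test invertibility first at the generic points of $Y$: localizing at a height one prime $\pp$ minimal over $I$ produces a one-dimensional reduced Noetherian local ring $A_\pp$ in which $I_\pp$ is primary to the maximal ideal, and there one must show that the conductor is principal, so that $(I/I^2)_\pp$ is free of rank one. Granting this, normal flatness enters decisively, since local freeness of $I/I^2$ over $B$ forces its rank to be locally constant, so the rank one found at the generic points of $Y$ propagates over all of $Y$; Nakayama's lemma then upgrades free-of-rank-one to principal, $I=(f)$, and the grade computation shows that $f$ may be taken to be a non-zero-divisor, whence $I$ is invertible at $y$. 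As $y$ was arbitrary, $\C$ is invertible and $\Bl_\C X\to X$ is an isomorphism. I expect the genuine difficulty to sit entirely in the generic-point analysis: proving that the one-dimensional conductor is principal amounts to controlling how the branches of $X$ come together along $Y$, i.e.\ ruling out that several branches meet and inflate the minimal number of generators of $\C$, and it is precisely here that pure codimension one and normal flatness must be used in tandem rather than separately, so this is the step I would expect to require the most care.
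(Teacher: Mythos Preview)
This theorem is not proved in the paper: it is quoted in the introduction from \cite{Shi82} as related work, with the explicit disclaimer that the authors ``do not pursue here'' the normal flatness hypothesis, and the terminal \qed\ marks it as a citation without proof. There is therefore no argument in the present paper against which to compare your proposal.

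Independently of that, the statement as printed almost certainly contains a misprint: the conclusion should read $X'\cong\wt X$, not $X'\cong X$. Your proposal takes the printed conclusion at face value and sets out to show that the conductor $\C$ is invertible as an $\O_X$-module, which would indeed yield $\Bl_\C X\cong X$. But this is false under the stated hypotheses. Take $X=\Spec k[x,y]/(xy)$, the node. Its normalization is $\wt X=\Spec(k[x]\times k[y])$, the conductor is the maximal ideal $\mm=(x,y)$, the conductor locus $Y=\{\mm\}$ has pure codimension one, and $X$ is trivially normally flat along $Y$ since $A/\mm=k$ is a field and every $k$-module is flat. Yet $\mm$ is not principal (its conormal $\mm/\mm^2$ is two-dimensional over $k$), so $\C$ is not invertible, and a direct chart computation shows $\Bl_\mm X\cong\wt X\ne X$. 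This is exactly the ``crux'' you yourself flagged and left open; it cannot be closed, because the target is wrong. Shinagawa's actual result asserts $X'\cong\wt X$, and his proof does not proceed via invertibility of $\C$ over $\O_X$.
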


\subsection*{Acknowledgments}
Preliminary results towards the ones presented here were obtained in the first named author's Master's thesis~\cite{Bir14}.

\section{Rational functions}\label{49}

All rings will be Noetherian commutative rings with unity.
For a ring $A$ we denote by $A^\reg$ the set of its regular elements and by
\[
Q(A):=(A^\reg)^{-1}A
\]
its total ring of fractions.
All schemes will be locally Noetherian and all morphisms quasicompact, that is, locally on the target, morphisms of Noetherian schemes.
A property that holds over each affine open set is refered to as an affine local property.

Let $X$ be a scheme.
Then $x\in X$ is called an \emph{associated point of $X$} if $\mm_{X,x}$ is an associated prime of $\O_{X,x}$.
We denote by $\Ass X$ the (locally finite) set of associated points of $X$.
For $x\in X$ we set
\[
\Ass(X,x):=\Ass(\O_{X,x}).
\]
Note that $U\cap\Ass X=\Ass U$ for any open $U\subset X$.
The following result is well-known; we give a proof.

\begin{lem}\label{7}
If $X=\Spec A$ is affine, then $\Ass X=\Ass A$.
\end{lem}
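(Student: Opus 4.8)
The plan is to rephrase the assertion in commutative algebra and reduce it to the behaviour of associated primes under localization. A point $x\in X=\Spec A$ corresponds to a prime $\pp\subset A$, and at this point $\O_{X,x}=A_\pp$ with maximal ideal $\mm_{X,x}=\pp A_\pp$; hence $x\in\Ass X$ means exactly that $\pp A_\pp\in\Ass A_\pp$. Under the canonical bijection $x\leftrightarrow\pp$, the lemma thus becomes the equivalence
\[
\pp\in\Ass A\iff\pp A_\pp\in\Ass A_\pp\qquad\text{for every prime }\pp\subset A,
\]
which I would prove by a direct annihilator computation.

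For the implication ``$\Rightarrow$'' I would start from $\pp=\Ann_A(a)$ with $a\in A$ and verify that $\pp A_\pp=\Ann_{A_\pp}(a/1)$: the inclusion ``$\subseteq$'' is immediate, and for ``$\supseteq$'' a relation $(s/t)(a/1)=0$ in $A_\pp$ gives $usa=0$ in $A$ for some $u\in A\setminus\pp$, whence $us\in\Ann_A(a)=\pp$ and so $s\in\pp$, i.e. $s/t\in\pp A_\pp$. This exhibits $\pp A_\pp$ as an associated prime of $A_\pp$.

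The converse ``$\Leftarrow$'' is where I expect the only real work, and it is the point at which the Noetherian hypothesis enters. Assuming $\pp A_\pp=\Ann_{A_\pp}(a/1)$ for some $a\in A$ (any generator of such an annihilator may be rescaled to denominator $1$), I would pick finitely many generators $\pp=(x_1,\dots,x_n)$, clear denominators in each relation $(x_i/1)(a/1)=0$ to obtain a single $u\in A\setminus\pp$ with $u x_i a=0$ for all $i$, and then show $\Ann_A(ua)=\pp$: one has $\pp\subseteq\Ann_A(ua)$ by construction; one has $ua\ne0$, since $ua=0$ would force $a/1=0$ in $A_\pp$, contradicting $\Ann_{A_\pp}(a/1)=\pp A_\pp\ne A_\pp$; and conversely $b\in\Ann_A(ua)$ gives $bu/1\in\pp A_\pp\cap A=\pp$, hence $b\in\pp$. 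Thus $\pp\in\Ass A$, which together with the forward direction yields the displayed equivalence, and hence $\Ass X=\Ass A$.
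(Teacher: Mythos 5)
Your proposal is correct and follows essentially the same route as the paper: both reduce the statement to the equivalence $\pp\in\Ass A\iff\pp A_\pp\in\Ass A_\pp$, prove one direction by localizing the annihilator (the paper phrases this via the embedding $A/\pp\into A$ and exactness of localization), and prove the other by using finitely many generators of $\pp$ to clear denominators and exhibit $\pp$ as $\Ann_A(ua)$. The minor cosmetic differences (a single $u$ instead of a product $q_1\cdots q_n$, the explicit check that $ua\ne0$) do not change the argument.
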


\begin{proof}\
\begin{asparaitem}

\item[($\subset$)] Let $\pp=\ideal{p_1,\dots,p_n}\in\Ass X$.
This means that $\pp A_\pp=\Ann_{A_\pp}(g/1)$ for some $g/1\in A_\pp$.
Then $g\in\pp$ and there are $q_i\not\in\pp$ such that $p_iq_ig=0$ in $A$.
It follows that $\pp\subset\Ann_A(qg)$ where $q=q_1\cdots q_n\not\in\pp$. 
Conversely, let $r\in\Ann_A(qg)$, then $r/1\in\Ann_A(qg)_\pp=\Ann_{A_\pp}(g/1)=\pp A_\pp$ implies $r\in\pp$. 
Thus, $\pp=\Ann_A(qg)$ which means that $\pp\in\Ass A$.

\item[($\supset$)] Let $\pp\in\Ass A$.
Then there is an inclusion $A/\pp\into A$ and hence $A_\pp/\pp A_\pp\into A_\pp$ by exactness of localization.
This means that $\pp\in\Ass X$.\qedhere

\end{asparaitem}
\end{proof}

For $x,y\in X$, we say that $y$ \emph{specializes to} $x$ (or $x$ \emph{generalizes to} $y$) and write $y\leadsto x$ if $x$ is in the closure of $y$.
This makes $X$ and hence $\Ass X$ into poset by setting $x\ge y$ if and only if $y\leadsto x$.
For $X=\Spec(A)$ and $x=\pp$ and $y=\qq$ this is equivalent to $\qq\subset\pp$.

\begin{lem}\label{56}
Any point of a locally Noetherian scheme specializes to a closed point.
\end{lem}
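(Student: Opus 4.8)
The plan is to reduce to the case that $X$ is integral with $x$ as its generic point, and then to argue by induction on $\dim X$. For the reduction, replace $X$ by the closed subscheme $Z:=\overline{\{x\}}$ with its reduced structure. Being a closed subscheme of a locally Noetherian scheme, $Z$ is again locally Noetherian, and it is integral with generic point $x$. Since $Z$ is closed in $X$, a point $z\in Z$ is closed in $Z$ if and only if it is closed in $X$; and since $x$ is the generic point of $Z$, every point of $Z$ is a specialization of $x$. Thus it suffices to prove that a nonempty integral locally Noetherian scheme has a closed point, and I may assume from now on that $X$ is integral with generic point $x$.

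Now induct on $d:=\dim X$. If $d=0$ then $X$ has no point besides $x$ (another point $w$ would give a length-$1$ chain $\overline{\{w\}}\subsetneq X$ of irreducible closed subsets), so $x$ is closed. If $d\ge 1$, choose a nonempty affine open $U=\Spec A\subseteq X$; then $A$ is a domain, and if $U=X$ any maximal ideal of $A$ is a closed point of $X$. Otherwise $Z':=X\setminus U$ is a nonempty closed subscheme, and I claim $\dim Z'<d$: given distinct points $w_0\leadsto\dots\leadsto w_m$ in $Z'$, prepending $x$ — which lies in $U$, hence differs from $w_0$ and generalizes every point of $X$ — produces a chain of length $m+1$ in $X$, so $m\le d-1$. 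Applying the inductive hypothesis to the integral locally Noetherian scheme $\overline{\{w\}}$ for any $w\in Z'$ (it is closed in $X$ of dimension $<d$) yields a point closed in $\overline{\{w\}}$, hence closed in $X$; and this point lies in $\overline{\{x\}}=X$, so it is a specialization of $x$. This completes the induction.

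The decisive idea is the strict drop $\dim(X\setminus U)<\dim X$, which works precisely because after the reduction the chosen affine open $U$ contains the generic point of $X$; this is the only place the Noetherian hypothesis is truly used (through the existence of an affine chart and finiteness of chains there). The step I expect to require the most care — and the only genuine obstacle — is that the induction as written runs over $d\in\mathbb{N}$, so literally it settles the case $\dim X<\infty$. For a possibly infinite-dimensional locally Noetherian scheme one must instead verify directly that the poset of nonempty irreducible closed subsets of $X$ satisfies the descending chain condition — equivalently, that $X$ carries no infinite strictly increasing specialization chain — and then pick a minimal irreducible closed subset contained in $\overline{\{x\}}$, which is forced to be a single closed point. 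Here one can bring in that each $\O_{X,y}$ has finite Krull dimension and that this dimension strictly increases along any specialization; the finite-dimensional induction above is exactly the easy, concrete shadow of this well-foundedness statement.
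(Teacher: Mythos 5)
Your reduction to the integral case and your induction on $\dim X$ are both correct, and together they do settle the lemma whenever $\dim X<\infty$. The problem is exactly the one you flag at the end: a locally Noetherian scheme can have infinite dimension (Nagata's infinite-dimensional Noetherian domain already gives an affine example), and the patch you propose for that case does not work. You want the descending chain condition for irreducible closed subsets, i.e.\ that there is no infinite strict chain of specializations $z_0\leadsto z_1\leadsto\cdots$, and you claim this follows because each $\O_{X,y}$ has finite Krull dimension and this dimension strictly increases along specializations. But that reasoning only yields $\dim\O_{X,z_n}\ge n$, a statement about a \emph{different} local ring at each stage; since these dimensions may be unbounded over the scheme (again Nagata), no contradiction results, and no common specialization of the whole chain is available at which to localize. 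Nothing purely order-theoretic can close this gap: the set $\mathbb{N}$ with closed sets the up-sets $\{n,n+1,\dots\}$ is a sober, locally Noetherian topological space in which every point has only finitely many generalizations, yet it carries an infinite specialization chain and has no closed point at all. So the commutative algebra must enter beyond what you use.

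The missing ingredient --- and the one underlying the proof in the cited Stacks tag, which is all the paper itself offers --- is the Krull principal ideal theorem in the form of Kaplansky's theorem on Goldman domains: if the generic point of $\Spec D$ is locally closed for a Noetherian domain $D$, then $\dim D\le 1$ (a Noetherian local domain of dimension $\ge 2$ has infinitely many height-one primes, whereas only finitely many height-one primes can contain a fixed $f\ne 0$). Granting this, first replace $x$ by a closed point $x_0$ of an affine chart $U$ with $x\leadsto x_0$; then $\overline{\{x_0\}}\cap U=\{x_0\}$, so $\{x_0\}$ is open in $T:=\overline{\{x_0\}}$, and for every affine open $V$ meeting $T$ the scheme $T\cap V$ is the spectrum of a Noetherian domain whose generic point is open. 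Hence $\dim(T\cap V)\le 1$ for all such $V$, so $\dim T\le 1$, and your finite-dimensional induction (in fact just the case $d\le 1$) produces a closed point of $T$, which is closed in $X$ and a specialization of $x$. Your argument becomes complete once this dimension-one reduction is inserted; without some input of principal-ideal-theorem type it cannot be completed as written.
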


\begin{proof}
See \cite[\href{http://stacks.math.columbia.edu/tag/02IL}{Lem.~02IL}]{stacks-project}.
\end{proof}

We equip $\Ass X\subset X$ with the subspace Zariski topology.
By the following result it consists of all decreasing subsets, that is, subsets stable under generalization.

\begin{lem}\label{41}
For each $x\in X$,
\begin{equation}\label{6}
\Ass(X,x)=\{y\in\Ass X\mid y\leadsto x\}.
\end{equation}
In particular, $\Ass(X,x)\subset\Ass X$ is open and equals the intersection of $\Ass X$ with all open neighborhoods of $x\in X$.
In case $x\in\Ass X$ this means that $\Ass(X,x)$ is the smallest open neighborhood of $x\in\Ass X$.
\end{lem}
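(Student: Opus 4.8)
The plan is to reduce the assertion to a single affine chart and then apply the standard behaviour of associated primes under localization. Fix an affine open neighbourhood $U=\Spec A$ of $x$ and let $\pp\subset A$ be the prime corresponding to $x$, so that $\O_{X,x}=A_\pp$ and hence, by definition, $\Ass(X,x)=\Ass(A_\pp)$. I would regard $\Ass(A_\pp)$ as a subset of $X$ via the canonical morphism $\Spec A_\pp\to\Spec A=U\into X$, under which a prime of $A_\pp$, necessarily of the form $\qq A_\pp$ with $\qq\subset\pp$ a prime of $A$, is sent to the point $\qq\in U\subset X$.

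Next I would record the topological input. If $V\subset X$ is open with $x\in V$ and $y\leadsto x$, then $y\in V$: otherwise $y$ would lie in the closed set $X\setminus V$, forcing $\overline{\{y\}}\subset X\setminus V$ and contradicting $x\in\overline{\{y\}}\cap V$. Taking the intersection over all such $V$ gives $\bigcap_{x\in V\text{ open}}V=\{y\in X\mid y\leadsto x\}$, which is exactly the image of $\Spec\O_{X,x}$ in $X$. In particular, any $y\in\Ass X$ with $y\leadsto x$ already lies in $U$, so by Lemma~\ref{7} together with the identity $U\cap\Ass X=\Ass U$ one gets
\[
\{y\in\Ass X\mid y\leadsto x\}=\{\qq\in\Ass A\mid \qq\subset\pp\},
\]
where on the right $\qq\subset\pp$ is merely the translation of $\qq\leadsto x$.

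The core step is the equality $\Ass(A_\pp)=\{\qq A_\pp\mid \qq\in\Ass A,\ \qq\subset\pp\}$, which is the compatibility of associated primes with localization for Noetherian rings applied to the multiplicative set $S=A\setminus\pp$ (where $\qq\cap S=\emptyset$ means precisely $\qq\subset\pp$). Combining this with the displayed identity and the identification from the first paragraph yields \eqref{6}. For the remaining claims: openness of $\Ass(X,x)$ in $\Ass X$ is affine local, and on $U$ one separates $\{\qq\in\Ass A\mid\qq\subset\pp\}$ from the finitely many remaining associated primes of $A$ by choosing, for each $\qq'\in\Ass A$ with $\qq'\not\subset\pp$, an element $f_{\qq'}\in\qq'\setminus\pp$ and setting $f=\prod_{\qq'}f_{\qq'}$, so that $D(f)\cap\Ass A=\{\qq\in\Ass A\mid\qq\subset\pp\}$. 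That $\Ass(X,x)$ equals the intersection of $\Ass X$ with all open neighbourhoods of $x$ then follows from \eqref{6} and the equality $\bigcap_{x\in V\text{ open}}V=\{y\mid y\leadsto x\}$ above; and if $x\in\Ass X$, then for any open neighbourhood $\Ass X\cap V$ of $x$ in $\Ass X$ the stability of $V$ under generization gives $\Ass(X,x)\subset\Ass X\cap V$, while $\Ass(X,x)$ is itself such a neighbourhood, so it is the smallest one.

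I do not anticipate a genuine obstacle; the only points requiring care are the bookkeeping that identifies $\Ass(X,x)$—a priori a set of primes of the local ring $\O_{X,x}$—with a subset of $X$, and making sure the localization formula for associated primes is invoked in a form valid under the standing Noetherian hypothesis.
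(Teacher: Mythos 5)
Your proposal is correct and follows essentially the same route as the paper: reduce to an affine chart via Lemma~\ref{7}, identify \eqref{6} with the compatibility of associated primes with localization at $\pp$, and prove openness by separating the finitely many $\qq'\in\Ass A$ with $\qq'\not\subset\pp$ using a product $f$ of elements $f_{\qq'}\in\qq'\setminus\pp$. The only difference is that the paper proves the localization statement directly (via embeddings $A/\qq\into A$ and exactness of localization) where you cite it as standard, which is harmless.
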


\begin{proof}
Replacing $X$ by an affine open neighborhood of $x$ we may assume that $X=\Spec A$ is affine and we write $\pp$ for $x$.
In particular, $\Ass(X,x)=\Ass A_\pp$ and $\Ass X=\Ass A$ by Lemma~\ref{7}.

\begin{asparaitem}

\item[($\subset$)] Let $\qq'\in\Ass A_\pp$ correspond to $\qq\in\Spec A$ with $\qq\subset\pp$.
Then there is an inclusion $A_\pp/\qq A_\pp=A_\pp/\qq'\into A_\pp$ and hence $A_\qq/\qq A_\qq\into A_\qq$ by exactness of localization.
This means that $\qq\in\Ass A$.

\item[($\supset$)] Let $\qq\in\Ass A$ with $\qq\subset \pp$.
This means that there is an inclusion $A/\qq\into A$ and hence $A_\pp/\qq A_\pp\subset A_\pp$ by exactness of localization.
This means that $\qq'=\qq A_\pp\in\Ass A_\pp$.

\end{asparaitem}

Let now $\{\qq\in\Ass A\mid\qq\not\subset\pp\}=\{\qq_1,\dots,\qq_n\}$.
Pick $f_i\in\qq_i\setminus\pp$ and set $f:=f_1\cdots f_n$.
Then $\{\qq\in\Ass A\mid\qq\subset\pp\}=D(f)\cap\Ass A$ is open in $\Ass A$.
\end{proof}

\begin{dfn}\label{27}
The $\O_X$-algebra of \emph{rational functions on $X$} can be defined by
\begin{equation}\label{30}
\Q_X:=i_*i^{-1}\O_X
\end{equation}
where $i:\Ass X\to X$ denotes the inclusion.
\end{dfn}

By \cite{Kle79}, $\Q_X$ is quasicoherent for reduced $X$ (but not in general). 
Moreover, its stalks and sections over affine open sets can be described as follows (see \cite[(20.2.11.1)]{EGA4}).

\begin{lem}\label{4}\pushQED{\qed}
Let $X$ be a scheme. 
\begin{enumerate}[(a)]
\item\label{4b} We have $\Gamma(U,\Q_X)=Q(A)$ for any affine open $U=\Spec A\subset X$.
\item\label{4a} We have $\Q_{X,x}=Q(\O_{X,x})$ for any $x\in X$, hence $\Q_{X,x}=\O_{X,x}$ if $x\in\Ass X$.
\end{enumerate}
\end{lem}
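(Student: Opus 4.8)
The plan is to reduce both assertions to the single ring-theoretic fact that, for every (Noetherian) ring $A$ and $i\colon\Ass A\into\Spec A$ the inclusion,
\[
\Gamma(\Ass A,i^{-1}\O_{\Spec A})=Q(A).
\]
Granting this, part~(a) is immediate: inverse image commutes with restriction to an open set, and $U\cap\Ass X=\Ass U=\Ass A$ by Lemma~\ref{7}, so $\Gamma(U,\Q_X)=\Gamma(U\cap\Ass X,i^{-1}\O_X)=\Gamma(\Ass A,i^{-1}\O_{\Spec A})$.

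For part~(b) I would first reduce to $X=\Spec A$ with $x=\pp$, so that $\O_{X,x}=A_\pp$, and write the stalk as $\Q_{X,x}=\varinjlim_{x\in V}\Gamma(V\cap\Ass X,i^{-1}\O_X)$. As in the proof of Lemma~\ref{41}, $\Ass(X,x)=D(f)\cap\Ass X$ for a suitable $f\notin\pp$, so $\Ass(X,x)=W\cap\Ass X$ for the open neighborhood $W=D(f)$ of $x$; on the other hand $\Ass(X,x)$ is contained in every open neighborhood $V$ of $x$, since its points are generalizations of $x$ and open sets are stable under generalization (if $x\in V$ and $x\in\ol{\{y\}}$, then $V\cap\ol{\{y\}}$ is a nonempty open subset of the irreducible space $\ol{\{y\}}$, and its complement is a proper closed set, which therefore cannot contain the generic point $y$). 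Hence $V\cap\Ass X=\Ass(X,x)$ for every open $V\subset W$, the colimit is eventually constant, and $\Q_{X,x}=\Gamma(\Ass(X,x),i^{-1}\O_X)$. Since $\Ass(X,x)$ equipped with the restriction of $i^{-1}\O_X$ is canonically $\Ass(A_\pp)$ equipped with $i^{-1}\O_{\Spec A_\pp}$ (the stalks on either side being the $A_\qq$ with $\qq\subset\pp$, the specialization maps being localizations), the displayed fact applied to $A_\pp$ gives $\Q_{X,x}=Q(A_\pp)=Q(\O_{X,x})$. Finally, if $x\in\Ass X$ then $\mm_x\in\Ass\O_{X,x}$, so every non-unit of $\O_{X,x}$ lies in the associated prime $\mm_x$ and is a zero divisor; hence $Q(\O_{X,x})=\O_{X,x}$.

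It remains to prove the displayed fact. Since $A$ is Noetherian, $\Ass A$ is finite, and by Lemma~\ref{41} its open sets are precisely the subsets stable under generalization; thus $\Ass A$ is a finite $T_0$-space in which $\pp$ has least open neighborhood $\{\qq\in\Ass A\mid\qq\subset\pp\}$. On such a space a sheaf is recovered from its stalks together with the specialization maps, and its global sections are the inverse limit of the stalks over the poset $\Ass A$. For $i^{-1}\O_{\Spec A}$ the stalk at $\pp$ is $A_\pp$ and the specialization map $A_\pp\to A_\qq$ (for $\qq\subset\pp$) is the localization map, so $\Gamma(\Ass A,i^{-1}\O_{\Spec A})=\varprojlim_{\pp\in\Ass A}A_\pp=:N$.

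The heart of the matter is identifying $N$ with $Q(A)=(A^\reg)^{-1}A$ — concretely, that a compatible family of fractions over $\Ass A$ admits a common denominator. Since $A^\reg\subset A\setminus\pp$ for each $\pp\in\Ass A$, there is a natural $Q(A)$-linear map $\phi\colon Q(A)\to N$ (each $A_\pp$, hence $N$, being a module over $Q(A)$). Rather than constructing denominators by hand, I would check that $\phi$ is an isomorphism after localizing at each maximal ideal of the semilocal ring $Q(A)$; by prime avoidance these are the $\pp Q(A)$ with $\pp$ a maximal element of $\Ass A$, and localizing a $Q(A)$-module at $\pp Q(A)$ coincides with localizing the underlying $A$-module at $\pp$. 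For such $\pp$ one has $(A_\qq)_\pp=0$ whenever $\pp\neq\qq\in\Ass A$ and $(A_\pp)_\pp=A_\pp$; since localization is exact it commutes with the finite limit $N$, so $N_\pp=A_\pp=Q(A)_\pp$ and $\phi_\pp$ is the identity. Consequently $\ker\phi$ and $\operatorname{coker}\phi$ are $Q(A)$-modules vanishing at every maximal ideal, hence are zero, and $\phi$ is an isomorphism. The main obstacle is this last identification; the accompanying point requiring care is the bookkeeping with $\Ass(X,x)$ in the reduction of part~(b). (The statement is also \cite[(20.2.11.1)]{EGA4}.)
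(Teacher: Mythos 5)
Your reductions are sound: part~(a) does follow from the single affine statement $\Gamma(\Ass A,i^{-1}\O_{\Spec A})=Q(A)$; the stabilization of the stalk colimit at $\Gamma(\Ass(X,x),i^{-1}\O_X)$ is correct; so is the identification of $(\Ass(X,x),i^{-1}\O_X|_{\Ass(X,x)})$ with $(\Ass A_\pp,i^{-1}\O_{\Spec A_\pp})$, and the description $\Gamma(\Ass A,i^{-1}\O_{\Spec A})=\varprojlim_{\pp\in\Ass A}A_\pp$ of sections on the finite poset space $\Ass A$. This is also a genuinely different route from the paper, which never forms this inverse limit in Lemma~\ref{4}: there, (a) is proved by observing via prime avoidance that the principal opens $D(t)$ with $t\in A^\reg$ are cofinal among the open neighborhoods of $\Ass U$, so that $\Gamma(U,\Q_X)=\varinjlim_{V\supset\Ass U}\Gamma(V,\O_X)=\varinjlim_{t\in A^\reg}A_t=Q(A)$, and (b) is then a double-colimit computation; the inverse-limit description only appears later, in Lemma~\ref{9}.

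However, the step you yourself call the heart of the matter has a genuine gap. The claim that $(A_\qq)_\pp=0$ for $\pp\neq\qq\in\Ass A$ with $\pp$ maximal in $\Ass A$ is false in general: $(A_\qq)_\pp=A_\qq\otimes_AA_\pp$ is the localization of $A$ whose primes are exactly those contained in both $\pp$ and $\qq$, so it vanishes if and only if no minimal prime of $A$ lies under both $\pp$ and $\qq$. In the presence of embedded primes this can fail. For instance, $A=k[x,y,z]/(x^2,xyz)$ has the irredundant primary decomposition $(x^2,xyz)=(x)\cap(x^2,y)\cap(x^2,z)$, hence $\Ass A=\{(x),(x,y),(x,z)\}$; taking $\pp=(x,z)$ (maximal in $\Ass A$) and $\qq=(x,y)$ gives $(A_\qq)_\pp=A_{(x)}\neq0$. (Your claim does hold when $X$ has no embedded points, but the lemma is stated for arbitrary locally Noetherian schemes.) The desired conclusion $N_\pp=A_\pp$ happens to survive in this example, because $(A_{(x,y)})_\pp\to(A_{(x)})_\pp$ becomes an isomorphism and so contributes nothing to the localized limit, but that is precisely the analysis your argument omits: once the off-diagonal terms are nonzero you must actually compute $\varprojlim_{\qq}(A_\qq)_\pp$, and the common-denominator problem you set out to avoid reappears in this form. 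The clean repair is the paper's cofinality argument, or else a direct proof of surjectivity of $Q(A)\to\varprojlim_{\pp\in\Ass A}A_\pp$ by producing, again via prime avoidance, a single $t\in A^\reg$ clearing all denominators of a given compatible family.
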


\begin{proof}\pushQED{\qed}
\begin{asparaenum}[(a)]

\item Recall that the $D(t)=\{\pp\in\Spec A\mid t\not\in\pp\}$ for $t\in A$ form a basis of the Zariski topology on $U$.
By Lemma~\ref{7}, 
\begin{equation}\label{39}
A^\reg=\{t\in A\mid D(t)\supset\Ass U\}
\end{equation}
The set $S:=A^\reg$ is multiplicatively closed and directed by setting $t\le t'$ if and only if $t\mid t'$.
For any $t,t'\in S$ with $t\le t'$, there is a morphism $A_t\to A_{t'}$.
These morphisms form a directed system and, using \eqref{39},
\begin{equation}\label{37}
Q(A)=S^{-1}A=\varinjlim_{t\in S}A_t=\varinjlim_{t\in S}\Gamma(D(t),\O_X)=\varinjlim_{D(t)\supset\Ass U}\Gamma(D(t),\O_X).
\end{equation}
By \eqref{30},
\begin{equation}\label{38}
\Gamma(U,\Q_X)=\Gamma(U\cap\Ass X,i^{-1}\O_X)=\varinjlim_{V\supset\Ass U}\Gamma(V,\O_X).
\end{equation}
Combining \eqref{37} and \eqref{38} yields a natural morphism $Q(A)\to\Gamma(U,\Q_X)$.
Conversely, for any open subset $V\supset\Ass U$, prime avoidance yields a $t\in A$ such that $V\supset D(t)\supset\Ass U$.
The claim follows.

\item We may assume that $X=\Spec A$ is affine.
Then, using \eqref{4b}, \eqref{37} and Lemma~\ref{41},
\begin{align*}
\Q_{X,x}&
=\varinjlim_{x\in D(s)}\Gamma(D(s),\Q_X)
=\varinjlim_{x\in D(s)}Q(A_s)\\
&=\varinjlim_{x\in D(s)}\varinjlim_{D(st)\supset\Ass(D(s))}A_{st}
=\varinjlim_{D(s)\supset\Ass(X,x)}A_s
=Q(\O_{X,x}).\qedhere
\end{align*}

\end{asparaenum}
\end{proof}

In particular, it follows from Lemma~\ref{4}.\eqref{4a} that
\[
\O_X\into\Q_X.
\]
We shall describe sections of $\Q_X$, and more generally of $\M\otimes_{\O_X}\Q_X$ for coherent $\M$, over arbitrary open sets.
We abbreviate 
\[
X':=\Ass X,\quad X'_x:=\Ass(X,x).
\]
Lemma~\ref{41} shows that
\begin{equation}\label{28}
(i_*\F)_x=\F(X'_x)
\end{equation}
for any sheaf $\F$ on $X'$ and any $x\in X$.
In case $x=x'\in X'$, the latter becomes 
\begin{equation}\label{35}
\F(X'_{x'})=\F_{x'}.
\end{equation}

\begin{lem}\label{9}\
\begin{asparaenum}[(a)] 
\item\label{9a} Let $\M$ be a coherent $\O_X$-module. 
Then
\begin{equation}\label{36}
\M\otimes_{\O_X}\Q_X=i_*i^{-1}\M=\left(U\mapsto\varprojlim_{x\in X'\cap U}\M_x\right).
\end{equation}
Note that $\varprojlim_{X'\cap U}=\prod_{X'\cap U}$ if $X$ has no embedded points.
\item\label{9c} For any affine open $U=\Spec A\subset X$ and $M:=\Gamma(X,\M)$,
\begin{equation}\label{50}
\Gamma(U,\M\otimes_{\O_X}\Q_X)=M\otimes_AQ(A).
\end{equation}
\item\label{9b} We have $\M_x\otimes_{\O_{X,x}}\Q_{X,x}=\varprojlim_{x'\in X'_x}\M_{x'}$ for any $x\in X$.
\end{asparaenum}
\end{lem}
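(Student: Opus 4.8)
The plan is to reduce all three parts to one statement in commutative algebra, call it $(\star)$: \emph{for a Noetherian ring $R$ and a finite $R$-module $M$, the canonical homomorphism}
\[
M\otimes_R Q(R)\longrightarrow\varprojlim_{\pp\in\Ass R}M_\pp
\]
\emph{is an isomorphism}, where the limit is over the poset $\Ass R$ (Lemma~\ref{7}) with transition maps the localization maps $M_\pp\to M_\qq$ for $\qq\subseteq\pp$. This map exists because $Q(R)=S^{-1}R$ with $S=R^\reg=R\setminus\bigcup_{\pp\in\Ass R}\pp$ (the zero divisors of $R$ being the union of its associated primes), so that $S\subseteq R\setminus\pp$ and hence $M\otimes_R Q(R)=S^{-1}M\to M_\pp$ for every $\pp\in\Ass R$.

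Granting $(\star)$, the three parts follow quickly. As $i^{-1}$ is exact and commutes with tensor products, and $i^{-1}i_*\cong\mathrm{id}$ for the inclusion of a subspace, we have $i^{-1}\Q_X\cong i^{-1}\O_X$, so under the adjunction $i^{-1}\dashv i_*$ the identity of $i^{-1}\M$ corresponds to a canonical morphism $\Phi\colon\M\otimes_{\O_X}\Q_X\to i_*i^{-1}\M$. I would check that $\Phi$ is an isomorphism on stalks. At $x\in X$ the source of $\Phi_x$ is $\M_x\otimes_{\O_{X,x}}Q(\O_{X,x})$ by Lemma~\ref{4}, and the target is $(i^{-1}\M)(X'_x)$ by \eqref{28}; covering $X'_x\subset X'$ by the minimal open neighborhoods $X'_{x'}$ $(x'\in X'_x)$ and applying the sheaf axiom together with \eqref{35} yields $(i^{-1}\M)(X'_x)=\varprojlim_{x'\in X'_x}\M_{x'}$, so $\Phi_x$ is precisely the map of $(\star)$ for $R=\O_{X,x}$ and $M=\M_x$, hence an isomorphism. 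The same sheaf-axiom computation over an arbitrary open $U$ gives $(i_*i^{-1}\M)(U)=(i^{-1}\M)(X'\cap U)=\varprojlim_{x\in X'\cap U}\M_x$, which is the second description in (a); and if $X$ has no embedded points, the points of $X'$ are minimal points of $X$, no two comparable, so each is open in $X'$, $X'$ is discrete, and the limit becomes a product. Part (c) is the stalk of (a) at $x$ (Lemma~\ref{4}), and part (b) is the group of sections of (a) over an affine $U=\Spec A$, which by Lemma~\ref{7} and $(\star)$ (applied to $A$ and $M=\Gamma(U,\M)$) equals $M\otimes_A Q(A)$.

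It remains to prove $(\star)$. Injectivity is prime avoidance: if $m/s\in S^{-1}M$ maps to $0$ in every $M_\pp$, then $\Ann_R(m)\not\subseteq\pp$ for all $\pp\in\Ass R$, so $\Ann_R(m)$ meets $S$ (prime avoidance, $\Ass R$ finite) and $m=0$ already in $S^{-1}M$. For surjectivity, the cokernel $C$ is a finite $Q(R)$-module; since every non-unit of $Q(R)$ is a zero divisor and hence lies in $\bigcup_{\pp\in\Ass R}\pp Q(R)$, prime avoidance shows the maximal ideals of $Q(R)$ are exactly the $\pp Q(R)$ with $\pp$ a maximal element of $\Ass R$, so it is enough to show $C_\pp=0$ for each such $\pp$. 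Localizing the map of $(\star)$ at $\pp$ commutes with the finite limit $\varprojlim_{\Ass R}$, and $R_\pp$ has depth zero (its maximal ideal is associated, by Lemma~\ref{41}), so $Q(R_\pp)=R_\pp$; moreover the poset $\Ass R_\pp$ has a greatest element $\pp R_\pp$, whose slot $(M_\pp)_{\pp R_\pp}=M_\pp$ is then the whole limit $\varprojlim_{\qq\in\Ass R,\ \qq\subseteq\pp}(M_\pp)_\qq$.

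The remaining point — which I expect to be the main obstacle — is that localizing at a maximal associated prime $\pp$ does not shrink the inverse limit: $\varprojlim_{\qq\in\Ass R}(M_\pp)_\qq=\varprojlim_{\qq\in\Ass R,\ \qq\subseteq\pp}(M_\pp)_\qq$, the natural restriction between the two being an isomorphism. Heuristically, for $\qq\not\subseteq\pp$ one has $\pp\cap\qq\subsetneq\pp$, and the slot $(M_\pp)_\qq$ is obtained from $M_\pp$ by inverting everything not contained in a prime below $\pp\cap\qq$, so it is already accounted for by the part of the diagram indexed by associated primes contained in $\pp$; making this bookkeeping precise is the technical heart. (Alternatively one can first dispose of the case where $R$ has no embedded primes — then $\dim Q(R)=0$, so $Q(R)$ is Artinian and $\cong\prod_{\pp\in\Ass R}R_\pp$, and $(\star)$ follows by tensoring with $M$ — but reducing the general case to this seems no easier.)
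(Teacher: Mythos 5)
Your sheaf-theoretic reductions are sound and run closely parallel to the paper's own: the identification of $i^{-1}\M$ with the limit sheaf $V\mapsto\varprojlim_{x'\in V}\M_{x'}$, the comparison map obtained from the adjunction $i^{-1}\dashv i_*$, and the passage to stalks and affine sections are all the same steps the paper takes (the paper verifies $i^{-1}\M=\M'$ on stalks rather than via the sheaf axiom over the cover by minimal opens, which is cosmetic). The genuine gap is in your proof of $(\star)$, and you flag it yourself: after localizing at a maximal element $\pp$ of $\Ass R$ you still have to show that the restriction map $\varprojlim_{\qq\in\Ass R}(M_\pp)_\qq\to\varprojlim_{\qq\subseteq\pp}(M_\pp)_\qq$ is an isomorphism, and you offer only a heuristic. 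This is not a routine cofinality statement --- $\{\qq\in\Ass R\mid\qq\subseteq\pp\}$ is a down-set, not an initial subdiagram of the poset $\Ass R$ --- so one genuinely has to prove that the slots indexed by $\qq\not\subseteq\pp$ impose no further conditions after localizing at $\pp$, for instance by analyzing the rings $(R_\pp)_\qq$. As written, surjectivity in $(\star)$, and with it the whole lemma, remains unproven.

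The paper closes exactly this point by a different and much shorter route, which you may want to adopt. It first settles \eqref{36} in the case $\M=\O_X$: by \eqref{30}, \eqref{31} and Lemma~\ref{4}.\eqref{4b} one already knows $Q(A)=\varprojlim_{\pp\in\Ass A}A_\pp$. Your $(\star)$ then reduces to the single assertion \eqref{32} that $M\otimes_A-$ commutes with this particular inverse limit, which the paper obtains from the finiteness of $\Ass A$ (the limit is over a finite poset, so the Mittag--Leffler condition is vacuous) together with finite presentation of $M$. In other words, the algebraic heart is not attacked prime by prime at all; it is absorbed into the already-established computation of $\Gamma(U,\Q_X)$ plus a commutation of tensor product with a finite limit. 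Either supply the missing localization argument for your strategy, or replace it by this reduction.
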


\begin{proof}
Setting $\M'(V):=\varprojlim_{x'\in V}\M_{x'}$ for any open $V\subset X'$ defines a sheaf on $X'$ (see \cite[\S4.2.2]{Cur14}).
For any open $U\subset X$,
\begin{equation}\label{29}
i_*\M'(U)=\varprojlim_{x\in X'\cap U}\M_x.
\end{equation}
We may therefore read the right-hand sheaf in \eqref{36} as $i_*\M'$ and the right-hand of \eqref{9b} as
$\M'(X'_x)$.
Now \eqref{28} reduces \eqref{9b} to proving \eqref{9a}.

By \eqref{30} and \eqref{29}, we settle \eqref{9a} in case $\M=\O_X$, by proving that
\begin{equation}\label{31}
i^{-1}\M=\M'.
\end{equation}
There is a natural morphism of sheaves $\M\to i_*\M'$.
Since $i^{-1}$ is left-adjoint to $i_*$, this gives rise to a morphism $i^{-1}\M\to\M'$.
That it is an isomorphism can be checked stalk-wise at any $x'\in X'$ using \eqref{35}:
\begin{equation}\label{40}
(i^{-1}\M)_{x'}=\M_{x'}=\varprojlim_{X'\ni y'\leadsto x'}\M_{y'}=\M'(X'_{x'})=\M'_{x'}.
\end{equation}
Since $i^{-1}$ is left-adjoint to $i_*$, the identity morphism of $i^{-1}\M$ induces a natural morphism $\M\to i_*i^{-1}\M$.
Using \eqref{30} and \eqref{31}, this yields a natural morphism of sheaves
\[
\M\otimes_{\O_X}\Q_X\to i_*\M'.
\]
To establish both \eqref{9a} and \eqref{9c} we show that this induces an isomorphism of global sections over any affine open using the presheaf tensor product.
To this end, we assume that $X=\Spec A$ and set $M:=\Gamma(X,\M)$.
Then, using Lemma~\ref{7} and the claim in case $\M=\O_X$, it suffices to show that
\begin{equation}\label{32}
M\otimes_A\varprojlim_{\pp\in\Ass A}A_\pp\cong\varprojlim_{\pp\in\Ass A}M_\pp.
\end{equation}
By $\O_X$-coherence of $\M$, $M$ is a finitely presented $A$-module.
Since $A$ is Noetherian, $\Ass A$ is finite and hence the Mittag--Leffler condition is trivially satisfied.
Therefore, the inverse limit commutes with tensor product and \eqref{32} holds true.
\end{proof}

\section{Rank of coherent modules}

Recall that an $A$-module $M$ has rank $\rk M=\rk_A M=r$ if $M\otimes_AQ(A)\cong Q(A)^r$ (see \cite[Def.~1.4.2]{BH93}).
In case $M$ is finite, this is equivalent to $M_\pp\cong A_\pp^r$ for all $\pp\in\Ass A$ (see \cite[Prop.~1.4.3]{BH93}).

\begin{dfn}\label{51}
Let $\M$ be a coherent $\O_X$-module.
We say that $\M$ has \emph{global rank} $\rk\M=\rk_X\M=r$ if
\begin{equation}\label{24}
\M\otimes_{\O_X}\Q_X\cong\Q_X^r.
\end{equation}
We say that $\M$ has \emph{local rank} $\rk\M=\rk_X\M=r$ if $\M_{x'}\cong\O_{X,x'}^r$ for all $x'\in X'$.
In case $\M\into\Q_X^r$, we say that $\M$ has a \emph{rank} if it has a local, or equivalently global, rank (see Lemma~\ref{23} below).
\end{dfn}

The following easy result applies to $A=Q(A)$.

\begin{lem}\label{26}
Let $A$ be a ring in which all regular elements are units.
Then any inclusion of free modules of equal finite rank is an equality.\qed
\end{lem}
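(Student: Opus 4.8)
The plan is to reduce the statement to a fact about square matrices and then invoke McCoy's theorem. Write the given inclusion as $\iota\colon M\into N$ with $M\cong A^r\cong N$ for some finite $r$. Choosing bases $f_1,\dots,f_r$ of $M$ and $e_1,\dots,e_r$ of $N$ and expanding $f_j=\sum_i T_{ij}e_i$ identifies $\iota$ with the $A$-linear map $A^r\to A^r$, $x\mapsto Tx$, for the square matrix $T=(T_{ij})$; its image is the $A$-span of the columns of $T$, which is exactly $M$ sitting inside $N$. Hence it suffices to show that $T$ is invertible over $A$: then its columns already span $N=A^r$, forcing $M=N$. Since the adjugate satisfies $T\cdot\operatorname{adj}(T)=\det(T)\cdot I$, invertibility of $T$ over $A$ follows as soon as $\det T$ is a unit, and by the hypothesis on $A$ this in turn follows as soon as $\det T$ is a regular element. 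So the entire claim reduces to showing that \emph{$\det T$ is a non-zero-divisor in $A$}.

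For this I would use that $\iota$, being an inclusion, is injective, hence so is $x\mapsto Tx$. It is then a standard fact---essentially McCoy's theorem---that an injective square matrix over a commutative ring has non-zero-divisor determinant; this can be cited, or obtained by a short bordered-minor argument: take $a\ne 0$ with $a\det T=0$, let $k$ with $0\le k<r$ be maximal such that $a$ does not annihilate every $k\times k$ minor of $T$ (such $k$ exist, $k=0$ giving the empty minor $1$, while $k=r$ is excluded because $a$ kills the only $r\times r$ minor $\det T$), fix a $k\times k$ minor $\delta$ of $T$ with $a\delta\ne 0$, say on rows $R$ and columns $C$, and a row index $i_0\notin R$. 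Laplace-expanding along its last column each $(k{+}1)\times(k{+}1)$ minor of $T$ with rows $R\cup\{i_0\}$ and columns $C\cup\{j\}$ (which vanishes if $j\in C$ and is annihilated by $a$ if $j\notin C$, by maximality of $k$) yields, uniformly in $j$, the relation $\sum_{i\in R\cup\{i_0\}}\varepsilon_i\,(a\mu_i)\,T_{ij}=0$, where the $\mu_i$ are fixed $k\times k$ minors of $T$ with $\mu_{i_0}=\pm\delta$ and the signs $\varepsilon_i$ do not depend on $j$. Thus the vector $w$ with $w_i=\varepsilon_i a\mu_i$ for $i\in R\cup\{i_0\}$ and $w_i=0$ otherwise is nonzero (its $i_0$-th entry is $\pm a\delta\ne 0$) and satisfies $w^{\top}T=0$, i.e.\ the rows of $T$ are $A$-linearly dependent; since $\det T^{\top}=\det T$, the same argument applied to $T^{\top}$ produces a nonzero column vector killed by $T$, contradicting injectivity of $\iota$. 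Hence $\det T$ is regular, therefore a unit, $T$ is invertible, and $M=N$.

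The one genuinely non-formal ingredient is this McCoy-type implication; everything around it is bookkeeping with bases and the adjugate, so that is the step I expect to be the main obstacle (or, if one prefers, the main thing to pin to a clean reference). As a sanity check, the case $r=1$ is transparent: an inclusion $aN\subseteq N$ with $aN$ free of rank $1$ forces $\Ann_A(a)=0$, i.e.\ $a\in A^\reg$, hence $a$ is a unit and $aN=N$. It is also worth noting that passing to $Q(A)$ offers no shortcut, since the hypothesis says precisely that $A=Q(A)$, and such a ring need not be Artinian---for instance the localization of $k[x,y]/(x^2,xy)$ at the maximal ideal $(x,y)$---so one cannot simply invoke ``an injective endomorphism of a module of finite length is surjective''.
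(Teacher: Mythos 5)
Your proof is correct. Note that the paper offers no argument for this lemma at all --- it is flagged as ``easy'' and stated with a \qed{} --- so there is no proof to compare against; your write-up is in effect supplying the omitted verification. The route you take (reduce to a square matrix $T$, show $\det T$ is regular via McCoy's theorem, conclude $\det T$ is a unit from the hypothesis $A=Q(A)$, and invert $T$ by the adjugate) is the standard one, and your bordered-minor proof of the McCoy step is correctly executed: the signs in the Laplace expansion depend only on the position of the row index inside $R\cup\{i_0\}$ and not on $j$, the $i_0$-entry of the relation vector is $\pm a\delta\ne0$, and passing to $T^{\top}$ converts the row relation into a genuine kernel vector, contradicting injectivity. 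Your closing remarks are also accurate: the hypothesis is exactly $A=Q(A)$, such rings need not be Artinian (your example $k[x,y]/(x^2,xy)$ localized at $(x,y)$ is a valid one), so no length argument is available; on the other hand, since the paper's standing convention is that all rings are Noetherian, one could if desired replace the hand-rolled minor computation by a citation of McCoy's theorem or of the fact that an injective endomorphism of $A^r$ has regular determinant. No gaps.
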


\begin{lem}\label{23}
Let $\M$ be a coherent $\O_X$-module.

\begin{enumerate}[(a)] 

\item\label{23c} If $\M$ has a global rank, then $\M$ has the same local rank.

\item\label{23d} $\M$ has a local rank $\rk\M=r$ if and only if $\M_{X,x}$ has rank $\rk\M_{X,x}=r$ for all $x\in X$.

\item\label{23a} If $X=\Spec A$ is affine and $\M=\wt M$, then the following are equivalent.
\begin{enumerate}[(1)]
\item\label{23a2} $\M$ has global rank $\rk\M=r$.
\item\label{23a0} $\M$ has local rank $\rk\M=r$.
\item\label{23a1} $M$ has rank $\rk M=r$.
\end{enumerate}

\item\label{23b} If $\M\into\Q_X^r$, then the following are equivalent.
\begin{enumerate}[(1)]
\item\label{23b0} $\M$ has local rank $\rk\M=r$.
\item\label{23b1} $\M$ has local rank $\rk\M\ge r$.
\item\label{23b2} The induced morphism $\M\otimes_{\O_X}\Q_X\to\Q_X^r$ is an isomorphism.
\item\label{23b3} For any affine open $U=\Spec A\subset X$ and $M:=\Gamma(U,\M)$, the induced morphism $M\otimes_AQ(A)\to Q(A)^r$ is an isomorphism.
\end{enumerate}
In particular, a local rank is global in this case.

\end{enumerate}
\end{lem}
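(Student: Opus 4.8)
The plan is to reduce every assertion to a statement about stalks at associated points, where the sheaf $\Q_X$ degenerates. The recurring tools will be: Lemma~\ref{4}, by which $\Q_{X,x'}=\O_{X,x'}$ for each $x'\in X'=\Ass X$, so that $(\M\otimes_{\O_X}\Q_X)_{x'}=\M_{x'}\otimes_{\O_{X,x'}}\Q_{X,x'}=\M_{x'}$; the identification $\Ass(\O_{X,x})=X'_x$ coming from Lemmas~\ref{7} and~\ref{41}, which turns associated primes of a stalk into associated points of $X$; the presentation $\M\otimes_{\O_X}\Q_X=i_*i^{-1}\M$ with affine sections $M\otimes_AQ(A)$ from Lemma~\ref{9}; and the cited fact \cite[Prop.~1.4.3]{BH93} that a finite module $M$ over a Noetherian ring $A$ has rank $r$ exactly when $M_\pp\cong A_\pp^r$ for all $\pp\in\Ass A$. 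With these, part~\eqref{23c} follows by localizing the given isomorphism $\M\otimes_{\O_X}\Q_X\cong\Q_X^r$ at a point $x'\in X'$ and reading off $\M_{x'}\cong\O_{X,x'}^r$. For part~\eqref{23d}, the stalk $\M_x$ is finite over $\O_{X,x}$ since $\M$ is coherent, so by \cite[Prop.~1.4.3]{BH93} it has rank $r$ iff $(\M_x)_\qq\cong(\O_{X,x})_\qq^r$ for all $\qq\in\Ass(\O_{X,x})$; rewriting via $\Ass(\O_{X,x})=X'_x$ this says $\M_{x'}\cong\O_{X,x'}^r$ for all $x'\in X'_x$, and since $X'=\bigcup_{x\in X}X'_x$ (because $x'\in X'_{x'}$) imposing it for every $x$ is precisely local rank $r$, while conversely a local rank restricts to each $X'_x$.

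In part~\eqref{23a}, the implication \eqref{23a2}$\Rightarrow$\eqref{23a0} is part~\eqref{23c}, and \eqref{23a0}$\Leftrightarrow$\eqref{23a1} is \cite[Prop.~1.4.3]{BH93} together with $\Ass X=\Ass A$ (Lemma~\ref{7}), since local rank $r$ unwinds to $M_\pp\cong A_\pp^r$ for all $\pp\in\Ass A$. For \eqref{23a1}$\Rightarrow$\eqref{23a2} I would argue by gluing: fix an isomorphism $\psi\colon M\otimes_AQ(A)\to Q(A)^r$; since localization is flat a regular element of $A$ stays regular in each $A_t$, giving ring maps $Q(A)\to Q(A_t)$, and Lemma~\ref{9}.\eqref{9c} over $D(t)=\Spec A_t$ identifies $\Gamma(D(t),\M\otimes_{\O_X}\Q_X)=M\otimes_AQ(A_t)=(M\otimes_AQ(A))\otimes_{Q(A)}Q(A_t)$ and $\Gamma(D(t),\Q_X^r)=Q(A_t)^r$. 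Base changing $\psi$ along $Q(A)\to Q(A_t)$ then gives isomorphisms on sections over all basic open sets, compatible with restriction because each is a base change of $\psi$, hence an isomorphism of sheaves $\M\otimes_{\O_X}\Q_X\cong\Q_X^r$.

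In part~\eqref{23b}, the inclusion $\M\into\Q_X^r$ localizes to $\M_{x'}\into\Q_{X,x'}^r=\O_{X,x'}^r$ for each $x'\in X'$. Here \eqref{23b0}$\Rightarrow$\eqref{23b1} is trivial, and \eqref{23b1}$\Rightarrow$\eqref{23b0} holds because a finite free module over the Noetherian ring $\O_{X,x'}$ embeds only into free modules of at least its own rank, which forces that rank to be $r$. For \eqref{23b0}$\Leftrightarrow$\eqref{23b2} I would check the morphism in \eqref{23b2} on stalks: by Lemma~\ref{9}.\eqref{9b} the stalk at any $x$ of the source is $\varprojlim_{x'\in X'_x}\M_{x'}$ and that of the target is $\varprojlim_{x'\in X'_x}\O_{X,x'}^r$, so the stalk of the morphism at $x$ is the inverse limit of its stalks at the $x'\in X'_x$, and it suffices to treat $x\in X'$, where the stalk is the inclusion $\M_{x'}\into\O_{X,x'}^r$; by Lemma~\ref{26} --- applicable because $\Q_{X,x'}=\O_{X,x'}$ makes every regular element of $\O_{X,x'}$ a unit --- this inclusion of free modules is an equality precisely when $\M_{x'}$ is free of rank $r$, and in passing \eqref{23b2} exhibits the global rank. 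Finally \eqref{23b2}$\Leftrightarrow$\eqref{23b3} follows from Lemma~\ref{9}.\eqref{9c}, since an isomorphism of sheaves may be tested on sections over a basis of affine opens.

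The main obstacle, and the reason for this organization, is that $\Q_X$ --- and with it $\M\otimes_{\O_X}\Q_X$ --- need not be quasicoherent, so one cannot freely identify these sheaves with the modules of their global sections over an affine. Every step is therefore tested either at stalks over $X'$, where $\Q_X$ collapses to $\O_X$, or at affine sections through Lemma~\ref{9}. The only points where a genuine sheaf isomorphism must be manufactured from module-level data are \eqref{23a1}$\Rightarrow$\eqref{23a2} and \eqref{23b0}$\Rightarrow$\eqref{23b2}, and I expect these to be the delicate steps; both are handled via the explicit local descriptions of $\M\otimes_{\O_X}\Q_X$ in Lemma~\ref{9} and by base change rather than through any appeal to quasicoherence.
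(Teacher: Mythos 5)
Your proof is correct and, for parts \eqref{23c}, \eqref{23d}, and most of \eqref{23b}, follows the same strategy as the paper: reduce everything to stalks at points of $\Ass X$ via Lemmas~\ref{4}, \ref{41} and \ref{9}, and conclude with Lemma~\ref{26} and \cite[Prop.~1.4.3]{BH93}. The one place you genuinely diverge is \eqref{23a1}$\Rightarrow$\eqref{23a2}: the paper uses the structure statement behind \cite[Prop.~1.4.3]{BH93} to produce an exact sequence $0\to F\to M\to T\to 0$ with $F$ free of rank $r$ and $T$ torsion, and then applies $\wt{-}\otimes_{\O_X}\Q_X$, which kills $T$ over the associated points and yields the isomorphism $\Q_X^r\cong\M\otimes_{\O_X}\Q_X$ in one stroke; you instead fix an isomorphism $M\otimes_AQ(A)\cong Q(A)^r$ and glue its base changes along $Q(A)\to Q(A_t)$ over the basic opens $D(t)$, using Lemma~\ref{9}.\eqref{9c} to identify sections. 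Both work: the paper's route is shorter because the exact sequence is already sheaf-level data and needs no gluing, whereas yours avoids the free-plus-torsion decomposition at the cost of having to know that the identifications of Lemma~\ref{9}.\eqref{9c} are natural in the open set (they are, since they come from sheafifying the presheaf tensor product). In \eqref{23b} you also reorganize the cycle of implications, proving \eqref{23b0}$\Leftrightarrow$\eqref{23b2} and \eqref{23b2}$\Leftrightarrow$\eqref{23b3} directly instead of deducing \eqref{23b3}$\Rightarrow$\eqref{23b0} from part \eqref{23a}, but the underlying computations are those of the paper; note only that in \eqref{23b1}$\Rightarrow$\eqref{23b0} your appeal to the fact that $\O_{X,x'}^{r'}$ embeds into $\O_{X,x'}^{r}$ only if $r'\le r$ is actually making explicit a step the paper leaves implicit before invoking Lemma~\ref{26}.
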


\begin{proof}\pushQED{\qed}
By coherence of $\M$, $M$ is finite.
\begin{asparaenum}[(a)] 

\item Taking stalks at $x\in X$ in \eqref{24} this follows from Lemma~\ref{4}.\eqref{4a}.

\item This follows from Lemma~\ref{41}.

\item By \eqref{23c}, \eqref{23a2} implies \eqref{23a0}.
By Lemma~\ref{7}, points $x\in X'$ correspond one-to-one to prime ideals $\pp\in\Ass A$.
Then $\M_x=M_\pp$ and $\O_{X,x}=A_\pp$ and \eqref{23a0} implies \eqref{23a1} by \cite[Prop.~1.4.3]{BH93}.
Assuming the latter, loc.~cit.~gives a short exact sequence of $A$-modules
\[
0\to F\to M\to T\to 0
\]
where $F$ is free of rank $r$ and $T$ is torsion.
These properties are preserved under localization.
By Lemmas~\ref{4}.\eqref{4a} and \ref{26}, applying $\wt{-}\otimes_{\O_X}\Q_X$ turns it into an isomorphism $\Q_X^r\cong\M\otimes_{\O_X}\Q_X$.
Thus, \eqref{23a1} implies \eqref{23a2}.

\item By \eqref{23a}, \eqref{23b3} implies \eqref{23b0}, which trivially implies \eqref{23b1}.
By Lemma~\ref{9}.\eqref{9a}, the morphism in \eqref{23b2} reads
\[
\M\otimes_{\O_X}\Q_X=i_*i^{-1}\M\to i_*i^{-1}\O_X^r=\Q_X^r
\]
and is induced by $i^{-1}\M\to i^{-1}\O_X^r$.
Its stalk at $x'\in X'$ is the inclusion $\M_{x'}\into\O_{X,x'}^r$ from the hypothesis.
If \eqref{23b1} holds it must be an equality by Lemma~\ref{26} and \eqref{23b2} follows.

For $U$ and $M$ as in \eqref{23b3}, by Lemma~\ref{4}.\eqref{4b} and injectivity of sheafification on sections, \eqref{23b2} yields an inclusion $M\otimes_AQ(A)\into\Gamma(U,\M\otimes_{\O_X}\Q_X)=Q(A)^r$.
For it to be an isomorphism it suffices to show that $\rk M=r$ by Lemma~\ref{26}.
By \cite[Prop.~1.4.3]{BH93} and Lemma~\ref{7}, this is equivalent to $\M_{x'}\cong\O_{X,x'}^r$ for all $x'\in U\cap X'$.
This follows from \eqref{23b2} due to Lemma~\ref{4}.\eqref{4a}.
Alternatively one could use that $\M_U\otimes_{\O_U}\Q_U$ is the presheaf tensor product as observed in the proof of Lemma~\ref{9} .
\qedhere

\end{asparaenum}
\end{proof}

\section{Fractional morphisms}

\begin{dfn}\label{2}
Let $\I$ be a coherent $\O_X$-submodule of $\Q_X$ and let $f:Y\to X$ be a morphism of schemes.
\begin{enumerate}[(i)] 
\item\label{2a} We call $\I$ a \emph{fractional ideal on $X$} if it has a rank.
\item\label{2b} We call $f$ a \emph{fractional morphism} if it induces a morphism 
\begin{equation}\label{44}
\xymat{
\O_Y\ar@{^(->}[r] & \Q_Y\\
f^{-1}\O_X\ar@{^(->}[r]\ar[u]^-{f^\#} & f^{-1}\Q_X\ar@{-->}[u]
}
\end{equation}
We call it a \emph{bifractional} if this morphism induces an isomorphism $\Q_X\cong f_*\Q_Y$.
\item\label{2c} For a fractional $f$, the \emph{inverse image of $\I$ under $f$} is the $\O_X$-submodule 
\[
\O_Y\I:=\O_Yf^\#(f^{-1}\I)\subset\Q_Y.
\]
\end{enumerate}
\end{dfn}

Due to Lemma~\ref{23} fractionality of ideals is an affine local property.

\begin{lem}\label{25}
Let $\I\subset\Q_X$ be a quasicoherent $\O_X$-submodule.
Then $\I$ is a fractional ideal on $X$ if and only if $I=\Gamma(U,\I)$ is a fractional ideal of $A$ for each affine open $U=\Spec A\subset X$.\qed
\end{lem}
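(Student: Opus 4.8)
The plan is to reduce the asserted equivalence to its affine counterpart, Lemma~\ref{23}.\eqref{23a}, after separating off the conditions of coherence and of being embedded in $\Q_X$, both of which are manifestly affine local.

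First I would fix an affine open $U=\Spec A\subset X$ and put $I:=\Gamma(U,\I)$. Restricting the inclusion $\I\subset\Q_X$ to $U$ and applying the left-exact functor $\Gamma(U,-)$ gives, together with Lemma~\ref{4}.\eqref{4b}, an inclusion $I\subset\Gamma(U,\Q_X)=Q(A)$; and since $\I$ is quasicoherent, $\I|_U=\wt I$. Thus ``$I$ is a fractional ideal of $A$'' means exactly that $I$ is a finite $A$-module possessing a rank, while coherence of $\I$ is precisely the finiteness of all these $I$. So in either direction of the proof we may assume $\I$ coherent. Using Lemma~\ref{7} to identify $\Ass U$ with $\Ass A$ — so that $\I_{x'}=I_\pp$ and $\O_{X,x'}=A_\pp$ under $x'\leftrightarrow\pp$ — Lemma~\ref{23}.\eqref{23a} then shows that $I$ has rank $r$ over $A$ if and only if $\I|_U$ has local rank $r$, i.e. $\I_{x'}\cong\O_{X,x'}^{r}$ for all $x'\in U\cap X'$.

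After this reduction it remains to see that $\I$ possesses a (local) rank — one single $r$ with $\I_{x'}\cong\O_{X,x'}^{r}$ for all $x'\in X'$ — precisely when each $\I|_U$ does. One direction is restriction. For the converse, assume each $\I|_U$ has a local rank $r_U$; then every stalk $\I_{x'}$ with $x'\in X'$ is free, and $r_U=r_{U'}$ whenever $U\cap U'\neq\emptyset$, since the intersection then contains a nonempty affine open and hence an associated point of $X$ at which both local ranks are read off. As a locally Noetherian scheme is locally connected, its connected components are open and are connected unions of affine opens, so chaining the previous remark makes $U\mapsto r_U$ constant on each connected component of $X$. Were it not globally constant, I would pick connected components $X_1\neq X_2$ on which the value is $a\neq b$, choose nonempty affine opens $V_i\subset X_i$, and observe that $U:=V_1\sqcup V_2$ is an affine open of $X$ with $\Gamma(U,\O_X)=\Gamma(V_1,\O_X)\times\Gamma(V_2,\O_X)$ for which $I=\Gamma(U,\I)=\Gamma(V_1,\I)\times\Gamma(V_2,\I)$ would have rank $a$ over the first factor and $b\neq a$ over the second — so $I$ would have no rank over $\Gamma(U,\O_X)$, contradicting the hypothesis. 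Hence $U\mapsto r_U$ is constant, $\I$ has that common local rank, and the equivalence follows.

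I expect the last step to be the only real obstacle: promoting the family of affine-local ranks to one global rank. The point is genuinely about disconnectedness — over a disconnected $X$ an $\O_X$-submodule of $\Q_X$ whose rank is locally but not globally constant still satisfies the local rank condition on every affine open lying inside a single component, so one must bring in an affine open straddling two components to witness the failure. Everything else is an unwinding of the definitions through Lemmas~\ref{4}, \ref{7}, and~\ref{23}.
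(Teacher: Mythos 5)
Your proof is correct and follows the route the paper itself intends: Lemma~\ref{25} is stated without proof beyond the preceding remark that Lemma~\ref{23} makes fractionality affine local, and your reduction via Lemma~\ref{4}.\eqref{4b}, Lemma~\ref{7} and Lemma~\ref{23}.\eqref{23a} is exactly that reduction. The one piece of genuine extra content you supply --- patching the affine-local ranks $r_U$ into a single global rank by passing to connected components and, in the disconnected case, invoking the affine open $V_1\sqcup V_2$ straddling two components --- is a point the paper's terse justification leaves implicit, and you handle it correctly.
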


\begin{rmk}\label{11}
Let $\I\subset\Q_X$ be a quasicoherent $\O_X$-submodule.
\begin{enumerate}[(a)]
\item\label{11b} By left exactness of the section functor and Lemma~\ref{4}.\eqref{4b}, $\I$ is coherent if and only if (affine) locally $\alpha\I\subset\O_X$ for some regular $\alpha\in\O_X$.
\item\label{11c} By Lemma~\ref{23}.\eqref{23b}, any fractional ideal $\I\ne0$ has (global) rank $\rk\I=1$ which means that (affine) locally $\alpha\O_X\subset\I$ for some regular $\alpha\in\O_X$.
\item\label{11a} By Lemma~\ref{4}.\eqref{4b}, $f:Y\to X$ is a fractional morphism if and only if for any restriction $\Spec B\to\Spec A$ of $f$ to affine open subsets, $B$ is a torsion free $A$-module.
\item\label{11d} Setting $\I=\O_X$ in Definition~\ref{2}.\eqref{2c}, $\O_Y\O_X=f^*\O_X=\O_Y$.
\item\label{11e} If $\alpha$ is as in \eqref{11b} or \eqref{11c} for $\I$, then $f^\#(\alpha)$ has the corresponding property for $\O_Y\I$.
In particular, $\O_Y\I$ inherits coherence and fractionality from $\I$.
\end{enumerate}
\end{rmk}

\section{Associating morphisms}

The notion of a fractional morphism in Defintion~\ref{2}.\eqref{2b} can be expressed in terms of associated points.
To this end we consider a variation of the notions of dominant and birational morphism (see \cite[\href{http://stacks.math.columbia.edu/tag/01RJ}{Def.~01RJ}, \href{http://stacks.math.columbia.edu/tag/01RO}{Def.~01RO}]{stacks-project} for comparison).

\begin{dfn}\label{8}
Let $f:Y\to X$ be a quasicompact morphism of locally Noetherian schemes.
\begin{enumerate}[(i)]
\item\label{8a} We call $f$ \emph{associating} if it induces a map $\Ass Y\to\Ass X$.
\item\label{8b} We call $f$ \emph{biassociating} if it induces a bijection $\Ass Y\to\Ass X$ 
and an isomorphism $f^\#_{\wt y}:\O_{X,f(\wt y)}\to\O_{Y,\wt y}$ for any maximal/closed $\wt y\in\Ass Y$.
\end{enumerate}
\end{dfn}

\begin{lem}\label{5}\
\begin{enumerate}[(a)] 
\item\label{5a} If $f:Y\to X$ is associating, then it is fractional.
\item\label{5b} The converse statement of \eqref{5a} holds if $X$ has no embedded points.
\item\label{5c} With $X$ also $Y$ has no embedded points if $f:\Ass Y\to\Ass X$ is a bijection.
\item\label{5d} If $f:Y\to X$ is biassociating, then it is bifractional and induces a homeomorphism $\Ass Y\to\Ass X$.
\end{enumerate}
\end{lem}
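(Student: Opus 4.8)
The plan is to prove Lemma~\ref{5} in four parts, working affine-locally throughout by translating the scheme-theoretic statements into commutative algebra via Lemmas~\ref{7}, \ref{4}, and \ref{41}. The unifying idea is that being ``associating'' is about where generic behavior lives, and over a ring $A$ the associated points correspond to the associated primes, with localization at an associated prime producing a ring in which regular elements are units (this is the content of Lemma~\ref{4}.\eqref{4a}).

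For \eqref{5a}, suppose $f$ is associating. To produce the dashed arrow in \eqref{44} it suffices to work over a restriction $\Spec B\to\Spec A$ of $f$ to affine opens and show $B$ is torsion-free over $A$, by Remark~\ref{11}.\eqref{11a}. Equivalently, every regular element of $A$ maps to a regular element of $B$, i.e.\ to a nonzerodivisor on $B$ as an $A$-module, which is a nonzerodivisor on $B$ itself since $A\to B$ factors through $B$. A regular element $a\in A$ lies outside every associated prime of $A$ by \eqref{39}; an element of $B$ is a zerodivisor iff it lies in some associated prime $\qq\in\Ass B$, and the assumption that $f$ is associating says the contraction $\qq\cap A$ lies in $\Ass A$, so $a\notin\qq\cap A$, hence $a\notin\qq$ (viewing $a$ as its image in $B$). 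Thus $a$ is regular in $B$ and $f^{-1}\O_X\to f^{-1}\Q_X$ followed by $f^\#$ extends over $\Q_Y$, as desired. For \eqref{5b}, conversely assume $X$ has no embedded points, so $\Ass A = \mathrm{Min}(A)$ for each affine chart $\Spec A$, and assume $f$ fractional, i.e.\ each $A\to B$ is torsion-free. Given $\qq\in\Ass B$ we must show $\qq\cap A$ is a minimal prime of $A$. Torsion-freeness of $B$ over $A$ says $A^{\reg}$ acts injectively on $B$; if $\qq\cap A$ were not minimal it would properly contain a minimal prime $\pp\in\Ass A=\mathrm{Min}(A)$, and choosing $a\in(\qq\cap A)\setminus\pp$ we would like a contradiction — here one localizes at $\pp$ to reduce to the Artinian local case, where $A_\pp$ has $\pp A_\pp$ as its unique associated prime, and uses that $Q(A)\otimes_A B$ sees all of $\Ass B$ after localization; an element of $A^{\reg}$ landing in an associated prime of $B$ would be a zerodivisor on $B$, contradicting torsion-freeness once we check $a$ becomes regular in $A$ after clearing the other minimal primes (prime avoidance, as in the proof of Lemma~\ref{4}).

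Part \eqref{5c} is a direct counting/combinatorial argument: $Y$ has an embedded point iff some $\wt y\in\Ass Y$ satisfies $\wt y \leadsto \wt y'$ for another $\wt y'\in\Ass Y$ with $\wt y\ne\wt y'$ (i.e.\ $\Ass Y$ has a strict specialization among its members). If $f:\Ass Y\to\Ass X$ is a bijection, then since $f$ is a morphism of schemes it is continuous and preserves specialization, so $\wt y\leadsto\wt y'$ forces $f(\wt y)\leadsto f(\wt y')$ in $X$; as $X$ has no embedded points and both images lie in $\Ass X$, this forces $f(\wt y)=f(\wt y')$, contradicting injectivity. Hence $Y$ has no embedded points. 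For \eqref{5d}, assume $f$ biassociating. By \eqref{5a} it is fractional; by \eqref{5c} (applied to the bijection $\Ass Y\to\Ass X$) neither $X$ nor $Y$ has embedded points, so by Lemma~\ref{9}.\eqref{9a} the inverse limits in \eqref{36} are products indexed over $\Ass$. The isomorphism $\Q_X\cong f_*\Q_Y$ can then be checked by comparing stalks, or rather global sections over an affine $U=\Spec A$ with preimage $\Spec B$: using Lemma~\ref{4}.\eqref{4b}, $\Gamma(U,\Q_X)=Q(A)=\prod_{\pp\in\Ass A}(\text{residue data})$ and $\Gamma(U,f_*\Q_Y)=Q(B)=\prod_{\qq\in\Ass B}(\cdots)$, and the bijection $\Ass B\to\Ass A$ together with the stalk isomorphisms $f^\#_{\wt y}:\O_{X,f(\wt y)}\xrightarrow{\sim}\O_{Y,\wt y}$ at the maximal members (which, by Lemma~\ref{41}, are exactly the minimal/associated primes, since an associated point with no proper generalization among associated points is closed in $\Ass$) matches these products factor by factor. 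This gives $\Q_X\cong f_*\Q_Y$, i.e.\ bifractionality. Finally, the homeomorphism $\Ass Y\to\Ass X$: the map is a continuous bijection by hypothesis, and since $\Ass X$ and $\Ass Y$ are (by Lemma~\ref{41}) spaces in which the topology is determined by the poset of generalization — every open is a decreasing set — a bijection preserving specialization in both directions is a homeomorphism; specialization is preserved by continuity of $f$, and reflected because the inverse bijection $\Ass X\to\Ass Y$ must also preserve the partial order (if $f(\wt y)\leadsto f(\wt y')$ but $\wt y\not\leadsto\wt y'$, comparing with the embedded-point-free structure and the local isomorphisms at closed points gives a contradiction).

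The main obstacle I anticipate is part \eqref{5b}: extracting from torsion-freeness alone the conclusion that contracted associated primes of $B$ are \emph{minimal} in $A$ requires genuinely using the no-embedded-points hypothesis on $X$ and a careful localization argument, because torsion-freeness is a condition about $A^{\reg}$ and one must bridge from ``$a$ acts injectively on $B$ for all $a\in A^{\reg}$'' to ``no associated prime of $B$ contracts into a non-minimal prime.'' The cleanest route is probably to localize $A$ at a minimal prime $\pp$, note $A_\pp$ is Artinian local with a single associated prime, observe $B_\pp = A_\pp\otimes_A B$ is a nonzero torsion-free — hence $\pp A_\pp$-coprimary-compatible — $A_\pp$-module, and deduce every $\qq\in\Ass B$ with $\qq\subset f^{-1}(\pp\text{-nbhd})$ contracts exactly to $\pp$; combined with the (finite, by Noetherianity) list of minimal primes and prime avoidance to exhibit a suitable regular element, this pins down $\qq\cap A\in\mathrm{Min}(A)=\Ass A$. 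The other parts are essentially bookkeeping with the dictionary already established in the excerpt.
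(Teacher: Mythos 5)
Parts \eqref{5a}--\eqref{5c} of your argument are sound. For \eqref{5a} you take a more elementary route than the paper: instead of constructing the dashed arrow in \eqref{44} sheaf-theoretically via the diagram of inclusions $\Ass Y\to Y$, $\Ass X\to X$ and the natural transformation $f^{-1}i_*\to j_*g^{-1}$, you reduce to torsion-freeness of $B$ over $A$ via Remark~\ref{11}.\eqref{11a} and observe that regular elements of $A$ avoid all contracted associated primes of $B$; this is legitimate and arguably cleaner. Your \eqref{5b} is, once the detour through localizing at a minimal prime is stripped away, exactly the paper's argument (prime avoidance produces $a\in A^\reg\cap\qq$ when $\qq\cap A\notin\Ass A=\mathrm{Min}(A)$, contradicting torsion-freeness), and \eqref{5c} matches the paper's one-line continuity argument.

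Part \eqref{5d} has a genuine gap. You write that ``by \eqref{5c} (applied to the bijection $\Ass Y\to\Ass X$) neither $X$ nor $Y$ has embedded points.'' But \eqref{5c} is conditional: it says $Y$ inherits the absence of embedded points \emph{from} $X$; it does not assert that $X$ has none. A biassociating morphism can perfectly well have a target with embedded points (the identity on $\Spec(k[x,y]/\ideal{x^2,xy})$ is biassociating). Your entire argument for $\Q_X\cong f_*\Q_Y$ rests on this false premise: without it, $Q(A)$ is the inverse limit $\varprojlim_{\pp\in\Ass A}A_\pp$, not a product indexed by $\Ass A$, and the factor-by-factor matching collapses. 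The same premise also hides the real work: Definition~\ref{8}.\eqref{8b} only provides stalk isomorphisms at the \emph{maximal} (closed-in-$\Ass$) associated points --- which are the maximal associated primes, not the minimal ones as your parenthetical suggests --- and one must propagate these to isomorphisms $\O_{X,x}\to\O_{Y,y}$ at \emph{every} associated point by localizing $g^\#_{\bar y}:\O_{X,\bar x}\to\O_{Y,\bar y}$ at the prime corresponding to $x$ via \eqref{6}, and then glue over the basis $\{X'_x\}$ of $\Ass X$. This localization step is what the paper's proof does and what is missing from yours. The homeomorphism claim suffers from the same problem: the paper obtains it from the identity $g^{-1}(X'_x)=Y'_y$ (so $g$ maps a basis to a basis), which again comes out of the localized stalk isomorphisms, whereas your sketch appeals to the ``embedded-point-free structure'' that you are not entitled to assume.
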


\begin{proof}\pushQED{\qed}\
\begin{asparaenum}[(a)]

\item Suppose that $f:Y\to X$ is associating.
Then there is a commutative diagram
\begin{equation}\label{19}
\xymat{
Y'=\Ass Y\ar[d]_g\ar@{^(->}[r]_-j & Y\ar[d]^f \\
X'=\Ass X\ar@{^(->}[r]_-i & X
}
\end{equation}
where $g$ is a map of posets.
Applying $j^{-1}$ to $f^\#:f^{-1}\O_X\to\O_Y$, this yields a morphism
\[
j^{-1}f^\#:g^{-1}i^{-1}\O_X=j^{-1}f^{-1}\O_X\to j^{-1}\O_Y.
\]
Applying $j_*$ and composing with the natural transformation $f^{-1}i_*\to j_*g^{-1}$ applied to $i^{-1}\O_X$ yields the desired morphism
\[
\xymat{
f^{-1}\Q_X=f^{-1}i_*i^{-1}\O_X\ar[r] & j_*g^{-1}i^{-1}\O_X\ar[r]^-{j_*j^{-1}f^\#} & j_*j^{-1}\O_Y=\Q_Y.
}
\]

To see the above natural transformation, let $\F$ be a sheaf on $X'$ and $U\subset Y$ open.
Since sheafification is left adjoint to the forgetful functor from sheaves to presheaves, we may use the presheaf inverse image.
Then
\[
(f^{-1}i_*\F)(U)=\varinjlim_{f(U)\subset V}\F(i^{-1}(V)),\quad (j_*g^{-1}\F)(U)=\varinjlim_{g(j^{-1}(U))\subset W}\F(W).
\]
By the commutative diagram \eqref{19}, $i^{-1}(V)$ is a valid choice for $W$.
Indeed, $f(U)\subset V$ means $U\subset f^{-1}(V)$.
Applying $j^{-1}$ this gives $j^{-1}(U)\subset j^{-1}f^{-1}(V)=g^{-1}i^{-1}(V)$, and finally $g(j^{-1}(W))\subset i^{-1}(V)$ as claimed.

\item Suppose conversely that $f$ induces a morphism $f^{-1}\Q_X\to\Q_Y$.
Let $y\in\Ass Y$.
By Lemma~\ref{7}, we may assume that $X=\Spec A$ and $Y=\Spec B$ with $y=\qq\in\Ass B$.
By Lemma~\ref{4}.\eqref{4b}, the hypothesis then becomes that $f^\#$ induces a morphism $Q(A)\to Q(B)$.
Assume that $x=f(y)\not\in\Ass X$.
Then $x=\pp=(f^\#)^{-1}(\qq)\not\in\Ass A$ and hence $\pp\not\subset\pp'$ for any $\pp'\in\Ass A$ as $X$ has no embedded points by hypothesis.
Then prime avoidance yields an $\alpha\in\pp\setminus\bigcup\Ass A$.
This means that $\alpha\in A$ is regular while $f^\#(\alpha)\in\qq\in\Ass B$ is not, a contradiction.
It follows that $x\in\Ass X$ and hence that $f$ is associating.

\item Since $f$ is continuous it is order preserving and the claim follows.

\item Let $y\in Y'$ and $x=g(y)\in X'$.
Pick $\bar x\in X'$ maximal such that $x\le\bar x$.
By the first part of Definition~\ref{8}.\eqref{8b} there is a unique $\bar y\in Y'$ with $g(\bar y)=\bar x$, which is then maximal by continuity of $g$.
By its second part, $g^\#_{\bar y}:\O_{X,\bar x}\to\O_{Y,\bar y}$ is an isomorphism.
Due to \eqref{6} $y\le\bar y$ and $g^{-1}(X'_x)=Y'_y$.
In particular, $g$ is a homeomorphism.
Localizing $g^\#_{\bar y}$ at the prime of $\O_{X,\bar x}$ corresponding to $x$ via \eqref{6}, \eqref{40} yields an isomorphism
\[
g^\#_y:\O'_X(X'_x)=\O_{X,x}\to\O_{Y,y}=\O'_Y(Y'_y)=(g_*\O'_Y)(X'_x).
\]
Since the $X'_x$ form a basis of the topology of $X$, it follows that $g^\#:\O'_X\to g_*\O'_Y$ is an isomorphism.
By \eqref{30}, \eqref{31} and commutativity of \eqref{19}, applying $i_*$ yields the desired isomorphism
\[
\xymat{
f^\#:\Q_X=i_*\O'_X\ar[r]^-{i_*g^\#}_-\cong & i_*g_*\O'_Y=f_*j_*\O'_Y=f_*\Q_Y.
}\qedhere
\]

\end{asparaenum}
\end{proof}

\section{Blowup of fractional ideals}

\begin{dfn}
Let $X$ be a scheme and let $\I$ be a coherent $\O_X$-submodule of $\Q_X$.
Then the \emph{blow up of $X$ along $\I$} is defined as
\begin{equation}\label{1}
\xymat{
Z:=\Bl_\I X:=\Proj_X\bigoplus_{i\ge0}\I^i\ar[r]^-{b} & X.
}
\end{equation}
\end{dfn}

\begin{lem}\label{10}
The blow up morphism $b$ in \eqref{1} is fractional.
\end{lem}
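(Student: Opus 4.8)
The plan is to verify fractionality of the blow up morphism $b\colon Z=\Bl_\I X\to X$ by reducing to an affine local computation, since fractionality is an affine local property by Remark~\ref{11}.\eqref{11a}, and then using the standard description of $\Proj$ via the homogeneous localizations at degree-one elements. So first I would pick an affine open $U=\Spec A\subset X$ and set $I=\Gamma(U,\I)\subset Q(A)$, which is a fractional ideal of $A$ by Lemma~\ref{25}. By Remark~\ref{11}.\eqref{11c}, after possibly shrinking $U$ we may assume $\alpha A\subset I$ for some regular $\alpha\in A$; likewise by Remark~\ref{11}.\eqref{11b} we may assume $\beta I\subset A$ for some regular $\beta$. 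Choose generators $a_1,\dots,a_n$ of $I$ with $a_1=\alpha$ regular. Then $b^{-1}(U)=\Proj\bigl(\bigoplus_{i\ge 0}I^i\bigr)$ is covered by the standard affine charts $\Spec A_j$, where $A_j$ is the degree-zero part of the homogeneous localization of the Rees-type algebra $\bigoplus_i I^i$ at the degree-one element $a_j\in I$.

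Next I would identify these charts concretely. Since $I$ sits inside $Q(A)$, the subring $A_j\subset Q(A)$ is simply $A[a_1/a_j,\dots,a_n/a_j]$, the subalgebra of $Q(A)$ generated over $A$ by the fractions $a_k/a_j$; here one uses that $a_j$ is a regular element of $Q(A)$ because some power of $a_j$ times $\beta^{m}$ lands in $A$ and, more to the point, the chart at $a_j$ only makes sense on the locus where $a_j$ is a unit, matching the inclusion into $Q(A)$. The key structural observation is then that $A_j$ is a subring of $Q(A)$ containing $A$, hence $A\into A_j$ is injective, and in particular $A_j$ is a torsion free $A$-module: any nonzero $A$-module map $A\to A_j$ or, concretely, multiplication by a regular $t\in A$ on $A_j\subset Q(A)$ is injective because $t$ is a nonzerodivisor in $Q(A)$. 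By Remark~\ref{11}.\eqref{11a}, torsion freeness of $B=A_j$ over $A$ for every such affine restriction is exactly what it means for $b$ to be fractional.

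To make the reduction to $\Spec A_j\subset Q(A)$ rigorous I would invoke the standard identification of $\Proj$ charts: the degree-zero part of $(\bigoplus_i I^i)_{(a_j)}$ is generated over $A$ by elements of the form $c/a_j^i$ with $c\in I^i$, and since $I^i\subset Q(A)$ and $a_j^i$ is regular in $Q(A)$, each such element is a well-defined element of $Q(A)$; conversely $a_k/a_j$ arises this way for $i=1$. One should note that $a_j$ need not be regular in $A$ itself, but that is harmless: the chart ring still embeds into $Q(A)$ via the canonical map once one checks the relevant localization is injective, which again follows from $a_j$ being regular in $Q(A)$ (every element of $I$ is regular in $Q(A)$ since $I$ has rank $1$, i.e.\ $I\otimes_A Q(A)\cong Q(A)$ by Remark~\ref{11}.\eqref{11c}).

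The main obstacle, and the point deserving the most care, is precisely this last subtlety: elements of $I\subset Q(A)$ are regular in $Q(A)$ but perhaps not in $A$, so one cannot naively localize $A$ at $a_j$, and one must instead argue directly inside $Q(A)$ that the homogeneous localization $(\bigoplus_i I^i)_{(a_j)}$ maps injectively onto the subring $A[a_k/a_j : k]$ of $Q(A)$. Once that identification is in hand, torsion freeness over $A$ is immediate and the lemma follows from Remark~\ref{11}.\eqref{11a}.
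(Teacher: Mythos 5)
Your overall plan (reduce to an affine chart and verify torsion-freeness over $A$, as required by Remark~\ref{11}.\eqref{11a}) is the right target, but the execution has a genuine gap: the claim that ``every element of $I$ is regular in $Q(A)$ since $I$ has rank $1$'' is false, and with it your key structural observation that each chart is a subring of $Q(A)$ containing $A$. An element of $Q(A)$ is regular there if and only if it is a unit, i.e.\ avoids every associated prime; having rank $1$ guarantees nothing of the sort for the individual generators $a_j$. Concretely, take $A=k[x,y]/\ideal{xy}$ and $I=\ideal{x,y}\subset A$. Then $I\otimes_AQ(A)\cong Q(A)$, so $\rk I=1$, but $x\in I$ is a zerodivisor, and the chart of $\Bl_I\Spec A$ at the degree-one element $x$ is $(\bigoplus_{i\ge0}I^i)_{(x)}\cong A/\ideal{y}\cong k[x]$: this is \emph{not} a subring of $Q(A)$, and $A$ does \emph{not} inject into it. So the identification $A_j=A[a_k/a_j:k]\subset Q(A)$ breaks down precisely at charts centered at zerodivisors, and such charts cannot in general be avoided by a clever choice of generators. (A secondary point: Lemma~\ref{10} assumes only that $\I\subset\Q_X$ is coherent, not that it is fractional, so Remark~\ref{11}.\eqref{11c} and Lemma~\ref{25} are not available at this stage.)

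The repair --- which is the paper's actual argument --- is to not identify the chart at all. Write $B=(\bigoplus_{i\ge0}I^i)_{(g)}$ for a homogeneous $g\in I^i$, $i\ge1$. Any $a\in A^{\reg}$ becomes a unit in $Q(A)$, hence multiplication by $a$ is injective on each $I^i\subset Q(A)$ and therefore on $\bigoplus_{i\ge0}I^i$; since localization at $g$ and projection onto the degree-zero summand are exact, $a$ stays regular on $B$. This gives torsion-freeness of $B$ over $A$ without ever needing $A\into B$ or $B\subset Q(A)$ --- indeed, in the example above $k[x]=A/\ideal{y}$ is torsion free over $A$ even though the structure map is not injective. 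That exactness argument is the missing idea in your proposal.
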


\begin{proof}
Using Remark~\ref{11}.\eqref{11a}, we may assume that $X=\Spec A$, $\I=\wt I$ for some $I\subset Q(A)$, and replace $Z$ by an affine open $V=\Spec B$ where $B=(\bigoplus_{i\ge0}I^i)_{(g)}$ for some $g\in I^i$ where $i\ge 1$.
We have to show that $B$ is a torsion free $A$-module.
Any $a\in A^\reg$ is a unit in $Q(A)$ and hence regular on $\bigoplus_{i\ge0}I^i$ since $I^i\subset Q(A)$.
Since localization and projection to a direct summand are exact operations, $a$ is also regular on $B$.
\end{proof}

\begin{rmk}\label{45}\
\begin{asparaenum}[(a)]

\item\label{45a} Denote by $\A:=\bigoplus_{i\ge0}\I^i$ the graded $\O_X$-algebra defining $\Bl_\I X$.
The inverse image $\O_Z\I=\O_Z(1)$ is the sheaf associated to the graded $\A$-module
\[
\I\bigoplus_{i\ge0}\I^i=\bigoplus_{i\ge0}\I^{i+1}.
\]
It is invertible since the $\A$ is generated by $\A_1=\I$.
In case $\I$ is invertible, $\Bl_\I X=X$.

\item\label{45b} By Remark~\ref{11}.\eqref{11b}, there exists locally a unit $\alpha$ in $\Q_X$ such that $\alpha\I$ is an $\O_X$-ideal.
Then $Z=\Bl_\I X$ is locally isomorphic to the blow up $Z':=\Bl_{\alpha\I}X$ of $X$ along $\alpha\I$ since multiplication by $\alpha^i$ in degree $i$ induces a graded isomorphism of $\O_X$-algebras 
\[
\xymat{
\bigoplus_{i\ge0}\I^i\ar[r]_-\cong^-{\alpha^\bullet\cdot} & \bigoplus_{i\ge0}\alpha^i\I^i.
}
\]
Over it, there is a graded isomorphism
\[
\xymat{
\bigoplus_{i\ge0}\I^{i+1}\ar[r]_-\cong^-{\alpha^\bullet\cdot} & \frac1\alpha\bigoplus_{i\ge0}(\alpha\I)^{i+1}
}
\]
which shows that $\O_Z(1)$ is isomorphic $\O_{Z'}(1)$ via the local isomorphism $Z\cong Z'$.

\end{asparaenum}
\end{rmk}

The universal property of blow up \cite[Def.~IV-16]{EH00} extends to blow ups of fractional ideals as follows.
This was remarked in \cite[Prop.~2.1]{OZ91} under slightly stronger hypotheses.

\begin{prp}\label{17}
Let $X$ be a scheme and let $\I$ be a coherent $\O_X$-submodule of $\Q_X$.
In the full subcategory of fractional morphisms $f:Y\to X$ such that $\O_Y\I$ is invertible, the blow up \eqref{1} is a terminal object.
\end{prp}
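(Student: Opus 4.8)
The plan is to mimic the proof of the classical universal property of blow ups, adapting it to the fractional setting using the tools from \S\ref{49}--\S\ref{1} above. So let $f\colon Y\to X$ be a fractional morphism with $\O_Y\I$ invertible. I must produce a unique $X$-morphism $Y\to Z=\Bl_\I X$. Since everything in sight is affine local (fractionality of $\I$ by Lemma~\ref{25}, the construction of $Z$, and uniqueness can be checked on an affine cover), I would reduce to $X=\Spec A$ with $\I=\wt I$ for a fractional ideal $I\subset Q(A)$. By Remark~\ref{45}.\eqref{45b} I may, after passing to a smaller open and multiplying by a unit $\alpha\in Q(A)$, assume that $I\subset A$ is an honest ideal; here one must check that this reduction is harmless, i.e.\ that it does not affect whether $\O_Y\I$ is invertible (which is exactly Remark~\ref{11}.\eqref{11e}) nor the resulting morphism (the local isomorphisms $Z\cong Z'$ of Remark~\ref{45}.\eqref{45b} are compatible). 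After this reduction, the statement is the ordinary universal property applied to $f$, with one subtlety: the classical statement requires $\O_Y\I$ to be an invertible ideal sheaf, i.e.\ the image of $f^{-1}\I\to\O_Y$ generates an invertible ideal; here $\O_Y\I=\O_Y f^\#(f^{-1}\I)\subset\Q_Y$ and I know it is an invertible $\O_Y$-module, but I need it to be an invertible $\O_Y$-submodule of $\O_Y$, i.e.\ contained in $\O_Y$. After the reduction to $I\subset A$ this holds since $f^\#(f^{-1}I)\subset f^\#(f^{-1}\O_X)\subset\O_Y$.

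For \emph{existence}: pull back the surjection $\bigoplus_{i\ge0}\I^i\twoheadleftarrow \mathrm{Sym}_{\O_X}\I$ (or rather work with the Rees algebra directly), form the relative Proj, and use that $\O_Y\I$ invertible together with the universal property of $\Proj$ to obtain $g\colon Y\to Z$ with $g^*\O_Z(1)\cong\O_Y\I$. Concretely, I would cover $Z$ by the affine opens $V=\Spec B$, $B=(\bigoplus_{i\ge0}I^i)_{(s)}$ for $s\in I^i$, $i\ge1$, as in the proof of Lemma~\ref{10}, and on the preimage in $Y$ — where $f^\#(s)$ becomes a basis of the invertible module $\O_Y\I$, hence a regular element generating it — send the degree-one generator $t/s\in B$ (for $t\in I$) to $f^\#(t)/f^\#(s)\in\Q_Y$, noting this lands in $\O_Y$ because $\O_Y\I\cdot (f^\#(s))^{-1}=\O_Y$. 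One checks these local ring maps are well defined (the relations in the Rees algebra are sent to relations, using that $f^\#$ is a ring map into $\Q_Y$) and glue. The commutativity with the structure morphisms to $X$ is immediate from the construction. That $f$ factors through $b$ as a \emph{fractional} morphism is automatic: any morphism to $X$ that is part of the data is fractional by hypothesis, and there is nothing further to check since $\Q$-compatibility for the composite follows from that for $f$.

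For \emph{uniqueness}: this is the heart of the matter and the step I expect to cost the most. Given two factorizations $g_1,g_2\colon Y\to Z$ of $f$, I want $g_1=g_2$. Working on the affine chart $V=\Spec B$ above, a morphism $Y'\to V$ over $X$ that pulls $\O_Z(1)$ back to an invertible submodule of $\O_{Y'}$ is determined by where it sends the generators $t/s$, and the constraint $g_i^\#(t/s)\cdot g_i^\#(s)=g_i^\#(t)=f^\#(t)$ inside $\Q_{Y'}$ — together with $f^\#(s)$ being a \emph{regular} element of $\O_{Y'}$ (it generates an invertible ideal, hence is a nonzerodivisor) — forces $g_i^\#(t/s)=f^\#(t)/f^\#(s)$ to be the \emph{same} element of $Q(\O_{Y'})$, hence of $\O_{Y'}$. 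Thus $g_1^\#=g_2^\#$ on this chart, and since the charts cover $Z$ and $Y$ is covered by their preimages, $g_1=g_2$. The main obstacle is bookkeeping: making sure that the chart-by-chart identification of ring maps is consistent with the gluing of $Z$ and that the "regular element" argument — which replaces the classical "$\O_Y\I$ is a sheaf of invertible ideals so its local generator is a nonzerodivisor" — is valid in our fractional setup, where a priori $\O_Y\I\subset\Q_Y$ and one must invoke Remark~\ref{11}.\eqref{11c} (or directly that an invertible submodule of $\Q_Y$ has, locally, a regular generator) to know the local generator of $\O_Y\I$ is regular. Once that is in place, the rest is the standard argument transplanted verbatim.
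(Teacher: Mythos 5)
Your plan is correct, but it takes a genuinely more hands-on route than the paper. The paper's proof is a three-line global argument: setting $\L:=\O_Y\I$, the surjection $f^*\I\onto\L$ induces a morphism of graded $\O_X$-algebras $f^*\bigoplus_{i\ge0}\I^i\to\bigoplus_{i\ge0}\L^{\otimes i}$, and the universal property of $\Proj_X$ produces the morphism $Y\to Z$ directly --- no reduction to honest ideals, no charts. You instead localize on $X$, clear denominators via Remark~\ref{45}.\eqref{45b} and Remark~\ref{11}.\eqref{11e} to reduce to an ideal $I\subset A$, and then run the classical chart-by-chart construction and uniqueness argument. What your route buys is that it proves the terminal-object claim in full: the paper's proof only \emph{constructs} the factorization and is silent on uniqueness, whereas you supply the argument that any factorization $g$ is forced on the charts $D_+(s)$ because $g^\#(t/s)\cdot f^\#(s)=f^\#(t)$ with $f^\#(s)$ regular. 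The one point you should spell out there is why $f^\#(s)$ generates $\O_Y\I$ (hence has zero annihilator, hence is regular) on all of $g^{-1}(D_+(s))$ for an \emph{arbitrary} factorization $g$, not just on the opens you chose in the existence step: after the reduction to $I\subset A$ one has $I\O_Z=s\O_Z$ on $D_+(s)$, so $\O_Y\I=I\O_Y=f^\#(s)\O_Y$ on $g^{-1}(D_+(s))$ since $g$ lies over $X$, and a single generator of an invertible module is a basis. The cost of your route is the extra bookkeeping of the $\alpha$-twist and the gluing, which the paper's direct appeal to the universal property of relative $\Proj$ for the fractional Rees algebra avoids.
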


\begin{proof}
Let $f:Y\to X$ be a morphism as in the statement.
Set $\L:=\O_Y\I$ such that $f^*\I\onto\L$.
Then
\[
f^*\bigoplus_{i\ge0}\I^i\to\bigoplus_{i\ge0}\O_Y\I^i=\bigoplus_{i\ge0}\L^{\otimes i}
\]
is a morphism of graded $\O_X$-algebras.
By the universal property of $\Proj_X$ (see \cite[\href{http://stacks.math.columbia.edu/tag/01O4}{Lem.~01O4}]{stacks-project}), this induces a morphism $Y\to Z$ of schemes over $X$ as required. 
\end{proof}

\section{Blowup of conductors}\label{14}

\begin{dfn}\label{58}\
\begin{enumerate}[(a)]

\item Let $f:Y\to X$ be a finite morphism of Noetherian schemes and let $\F$ be a coherent $\O_X$-module.
Then $\SHom_{\O_X}(f_*\O_Y,\F)$ defines a coherent $\O_Y$-module $f^!\F$ (see \cite[Prop.~6.4.25.(a)]{Liu02}).
The $\O_X$-ideal $\C_{Y/X}:=\SHom_{\O_X}(f_*\O_Y,\O_X)$ is the \emph{conductor of $f$}.

\item Let $X$ be a reduced scheme and let $\ol\O_X$ be the integral closure of $\O_X$ in $\Q_X$.
Then $f:\wt X:=\Spec_X\ol\O_X\to X$ is the \emph{normalization} and $\C_{\wt X/X}$ is the \emph{conductor of $X$}.

\end{enumerate}
\end{dfn}

\begin{rmk}\label{18}
A scheme $X$ is called \emph{Nagata} if for any affine open $U=\Spec A\subset X$ (in some cover) the ring $A$ is Nagata.
Nagata schemes have a finite normalization morphism.
Since we need only this latter property, we omit the details of the definition.
\end{rmk}

To define conductors of fractional ideals we apply the following easy result.

\begin{lem}\label{20}
Let $A$ be a ring and let $J$ and $I$ be fractional ideals of $A$. 
Then $\Hom_A(J,I)\cong I:_{Q(A)}J$ given by $\frac{\varphi(a)}{a}\leftrightarrow\varphi$ for any $a\in A^\reg\cap J$.
In particular, $\Hom_A(J,I)$ is again a fractional ideal.\qed
\end{lem}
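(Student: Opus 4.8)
The plan is to construct the isomorphism explicitly from the stated formula and then read off fractionality. Assume $J\ne0$, as the formula tacitly requires $A^\reg\cap J\ne\emptyset$; such an $a\in A^\reg\cap J$ exists by Remark~\ref{11}.\eqref{11c} applied over $\Spec A$ (equivalently, extract it from the surjectivity of $J\otimes_AQ(A)\to Q(A)$, which holds since $J$ has rank $1$). Fixing $a$, define the $A$-linear map $\Theta\colon\Hom_A(J,I)\to Q(A)$ by $\Theta(\varphi):=\varphi(a)/a$.

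The key point is the identity $\Theta(\varphi)\,j=\varphi(j)$ for every $j\in J$; it shows at once that $\Theta(\varphi)\in I:_{Q(A)}J$, that $\Theta$ is independent of the choice of $a$ (take $j$ to be another regular element of $J$), and that $\Theta$ is injective. To prove it, write $j=b/s$ with $s\in A^\reg$ and $b:=sj\in A\cap J$; using $A$-linearity and that $a,b$ lie both in $A$ and in $J$ gives $b\,\varphi(a)=\varphi(ba)=\varphi(ab)=a\,\varphi(b)$, so $\Theta(\varphi)\,j=\varphi(a)b/(as)=\varphi(b)/s$, while $s\,\varphi(j)=\varphi(sj)=\varphi(b)$ gives the claimed $\Theta(\varphi)\,j=\varphi(j)\in I$. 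For surjectivity, $x\in I:_{Q(A)}J$ yields $\varphi_x\colon J\to I$, $j\mapsto xj$, with $\Theta(\varphi_x)=xa/a=x$. This produces the isomorphism $\Hom_A(J,I)\cong I:_{Q(A)}J$ of $A$-modules, realized by the correspondence of the statement.

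For the final assertion I would compute $\rk\Hom_A(J,I)$ and transport it along this isomorphism. As $J$ is coherent, hence finitely presented over the Noetherian ring $A$, and $Q(A)$ is flat over $A$, one has $\Hom_A(J,I)\otimes_AQ(A)\cong\Hom_{Q(A)}(J\otimes_AQ(A),I\otimes_AQ(A))\cong\Hom_{Q(A)}(Q(A),Q(A))=Q(A)$, the middle isomorphism because $I$ and $J$ have rank $1$. Hence $I:_{Q(A)}J$ is a submodule of $Q(A)$ of rank $1$; it is moreover finitely generated (being contained in $a^{-1}I$, since $a\in J$), hence a coherent $\O_X$-submodule of $\Q_X$ over $X=\Spec A$, so it is a fractional ideal by Lemma~\ref{23}.\eqref{23a}. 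I expect no real obstacle here; the one point worth care is the very first step, that $A^\reg\cap J$ is nonempty so $\varphi(a)/a$ makes sense, the rest amounting to routine bookkeeping with the double role of elements of $A\cap J$.
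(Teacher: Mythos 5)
Your proof is correct, and it is exactly the verification of the correspondence $\varphi\leftrightarrow\varphi(a)/a$ that the statement itself indicates; the paper marks this lemma as easy and omits the proof entirely. The one point of care you flag (nonemptiness of $A^\reg\cap J$, via Remark~\ref{11}.\eqref{11c}) is indeed the only nontrivial input, and your rank computation for the final assertion is sound.
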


\begin{lem}\label{21}
Let $f:Y\to X$ be a finite bifractional morphism of schemes.
Let $\J$ and $\I$ be fractionals on $Y$ and $X$ respectively.
Then $f_*\J$ and 
\[
\C_{\J/\I}:=\SHom_{\O_X}(f_*\J,\I)=\I:_{\Q_X}f_*\J
\]
are fractional ideals on $X$ and there is a unique fractional ideal $\C'_{\J/\I}$ on $Y$ such that
\[
f_*(\C'_{\J/\I})=\C_{\J/\I}.
\]
\end{lem}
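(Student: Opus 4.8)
The plan is to reduce everything to the affine, commutative-algebra setting and then invoke Lemmas \ref{20} and \ref{25} together with the bifractionality of $f$. First I would pick an affine open $U=\Spec A\subset X$ and, since $f$ is finite, write $f^{-1}(U)=\Spec B$ with $B$ a module-finite $A$-algebra; set $I:=\Gamma(U,\I)$ and $J:=\Gamma(f^{-1}(U),\J)$, which are fractional ideals of $A$ and $B$ respectively by Lemma \ref{25}. Because $f$ is bifractional, the map $f^\#$ identifies $Q(A)$ with $(f_*\Q_Y)(U)=\Gamma(f^{-1}(U),\Q_Y)=Q(B)$; I would record this identification $Q(A)=Q(B)=:K$ explicitly, since it is what makes the colon ideals in $K$ unambiguous. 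Note also $f$ is associating and fractional (Lemma \ref{5}), so $B$ is a torsion-free, hence fractional-compatible, $A$-module.

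Next I would show $f_*\J$ is a fractional ideal on $X$. Affine locally this is the statement that $J$, viewed as an $A$-submodule of $K=Q(A)$ via the identification above, is a fractional ideal of $A$: it is coherent because $J$ is a finite $B$-module and $B$ is a finite $A$-module, so $J$ is a finite $A$-module, and since $J$ is already an $A$-submodule of $\Q_X$ it has a rank by Lemma \ref{23}.\eqref{23b} (rank $1$, by Remark \ref{11}.\eqref{11c}). Then $\C_{\J/\I}=\SHom_{\O_X}(f_*\J,\I)$ is computed affine locally as $\Hom_A(J,I)$, which by Lemma \ref{20} equals $I:_K J$ and is again a fractional ideal of $A$; these local descriptions glue because $\SHom$ of coherent sheaves is coherent and its formation commutes with restriction to affine opens, giving the displayed equality $\C_{\J/\I}=\I:_{\Q_X}f_*\J$ and the fractionality of $\C_{\J/\I}$ via Lemma \ref{25}.

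For the last assertion — the existence of a unique fractional ideal $\C'_{\J/\I}$ on $Y$ with $f_*(\C'_{\J/\I})=\C_{\J/\I}$ — the key point is that $\C_{\J/\I}$ is naturally a $B$-submodule of $K$, not merely an $A$-submodule. Indeed $\Hom_A(J,I)=I:_K J$ is stable under multiplication by $B$, because $J$ is a $B$-module: if $x\cdot J\subset I$ and $b\in B$ then $(xb)\cdot J=x\cdot(bJ)\subset x\cdot J\subset I$. So affine locally I would define $\C'_{\J/\I}$ by the $B$-module $I:_K J$; this is a fractional ideal of $B$ (finite over $B$ since it sits between a regular multiple of $B$ and a regular multiple inside, by Remark \ref{11}, and has rank $1$), and $f_*$ of the associated sheaf recovers exactly $\C_{\J/\I}$ since pushforward along an affine morphism is just restriction of scalars. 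Uniqueness is immediate: $f_*$ is faithful on quasicoherent sheaves along an affine (indeed finite) morphism, so two fractional ideals on $Y$ with the same pushforward coincide. Gluing these affine-local constructions is automatic because they are cut out by the same colon formula.

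The main obstacle I anticipate is purely bookkeeping: making the identification $Q(A)\cong Q(B)$ induced by bifractionality genuinely canonical and compatible across overlapping affines, so that the colon ideals $I:_K J$ patch to a well-defined subsheaf of $\Q_X$ (for $f_*\C'$) and of $\Q_Y$ (for $\C'$). This requires checking that the isomorphism $\Q_X\cong f_*\Q_Y$ from Definition \ref{2}.\eqref{2b} is the one used to regard $J$ and $I$ as living in a common total ring of fractions on each affine piece, and that Lemma \ref{20}'s correspondence $\varphi\leftrightarrow\varphi(a)/a$ is independent of the chosen regular element $a\in A^\reg\cap J$ — but the latter is exactly the content of that lemma, so once the total-ring-of-fractions identifications are pinned down, the rest is routine.
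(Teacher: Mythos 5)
Your argument is correct and follows essentially the same route as the paper: reduce to the affine picture $\varphi\colon A\into B$ with $Q(A)=Q(B)$ coming from bifractionality, apply Lemma~\ref{20} to get $\Hom_A(J,I)=I:_{Q(A)}J$, observe that this is a $B$-module because $J$ is, and take the associated sheaf on $Y$ (uniqueness being the faithfulness of $f_*$ for the affine morphism $f$). The one point where the paper is more careful is the rank computation: it verifies $\rk_AJ=\rk_BJ=1$ explicitly via $J\otimes_AQ(A)=J\otimes_BQ(B)\cong Q(B)=Q(A)$ (and likewise for $\Hom_A(J,I)$), whereas your appeal to Lemma~\ref{23}.\eqref{23b} and Remark~\ref{11}.\eqref{11c} implicitly uses this same transfer of rank from $B$ to $A$ through the identification $Q(A)=Q(B)$ and should be spelled out.
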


\begin{proof}
Since $f$ is finite and hence affine, $f_*\J$ is coherent and we may assume that both $X=\Spec(A)$ and $Y=\Spec(B)$ are affine.
By Lemma~\ref{4}.\eqref{4b}, \eqref{44} up to isomorphism translates to 
\begin{equation}\label{43}
\xymat{
B\ar@{^(->}[r] & Q(B)\\
A\ar@{^(->}[r]\ar@{^(->}[u]^-{\varphi} & Q(A)\ar@{=}[u]
}
\end{equation}
By Lemma~\ref{25}, $J:=\Gamma(Y,\J)=\Gamma(X,f_*\J)$ and $I:=\Gamma(X,\I)$ are fractional ideals of $B$ and $A$ respectively, and $\SHom_{\O_X}(f_*\J,\I)$ is the sheaf associated to $\Hom_A(J,I)$ where $J$ is an $A$-module via $\varphi$.
We may assume that $I\ne0\ne J$.
Since $\varphi:A\to B$ is finite, $J$ is a finite $A$-module and $J\otimes_AQ(A)=J\otimes_B Q(B)\cong Q(B)=Q(A)$ which means that $\rk_AJ=\rk_BJ=\rk_Y\J=1$ by Lemma~\ref{23}.\eqref{23a} and Remark~\ref{11}.\eqref{11c}.
Hence $J$ and then by Lemma~\ref{20} also $\Hom_A(J,I)$ is a non zero fractional ideal of $A$.
With $J$ also $\Hom_A(J,I)$ is a $B$-module which is finite over $A$ and hence over $B$.
Thus, $\Hom_A(J,I)\otimes_BQ(B)=\Hom_A(J,I)\otimes_AQ(A)\cong Q(A)$ and therefore $\rk_B\Hom_A(J,I)=\rk_A\Hom_A(J,I)=1$.
Thus, using Lemma~\ref{25}, the fractional ideal
\begin{equation}\label{33}
\C'_{\J/\I}:=\wt{\Hom_A(J,I)}
\end{equation}
is the $\O_Y$-module associated to the $B$-module $\Hom_A(J,I)$.
\end{proof}

\begin{rmk}\label{62}
If $f:Y\to X$ is finite bifractional, then $\C'_{\O_Y/\I}=f^!\I$ in Lemma~\ref{21}.
\end{rmk}

\begin{lem}\label{13}
Under the hypotheses of Lemma~\ref{21}, 
$\Bl_{\C_{\J/\I}}X=\Bl_{\C'_{\J/\I}}Y$.
\end{lem}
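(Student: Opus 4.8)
The plan is to reduce everything to the affine situation and then use the universal property of blow up, Proposition~\ref{17}, applied over both $X$ and $Y$. Since $f$ is finite bifractional, by Lemma~\ref{21} we have fractional ideals $\C_{\J/\I}$ on $X$ and $\C'_{\J/\I}$ on $Y$ with $f_*(\C'_{\J/\I})=\C_{\J/\I}$. First I would recall that blow up commutes with restriction to affine opens, and that $f$ is affine, so it suffices to prove the statement after passing to an affine open $\Spec A\subset X$ with preimage $\Spec B$, where $A\to B$ is finite and, by Lemma~\ref{4}.\eqref{4b}, induces an isomorphism $Q(A)\cong Q(B)$ as in diagram~\eqref{43}. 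In this affine picture the equality $f_*(\C'_{\J/\I})=\C_{\J/\I}$ says simply that the $A$-module and the $B$-module underlying the two conductors coincide, namely both equal $\Hom_A(J,I)=I:_{Q(A)}J\subset Q(A)=Q(B)$ by \eqref{33} and Lemma~\ref{20}.

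The key point is then the following: if $\C\subset Q(A)=Q(B)$ is a fractional ideal that is simultaneously an $A$-submodule and a $B$-submodule of $Q(A)$, then $\Proj_A\bigoplus_{i\ge0}\C^i=\Proj_B\bigoplus_{i\ge0}\C^i$ over $X$. Here one must be slightly careful: in the first Rees algebra the powers $\C^i$ are products inside $Q(A)$ of $A$-submodules, and in the second they are products of $B$-submodules, but since multiplication happens in the common ring $Q(A)=Q(B)$, the submodule $\C^i\subset Q(A)$ is literally the same set in both cases — only the ring over which we regard the graded algebra differs. So the two graded rings $\bigoplus_{i\ge0}\C^i$ are equal as graded rings; they carry an $A$-algebra structure and a $B$-algebra structure, compatible via $A\to B$. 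Taking $\Proj$ of this single graded ring, the structure morphism to $\Spec B$ composed with $\Spec B\to\Spec A$ is the structure morphism to $\Spec A$, which gives the claimed identification of the two blow ups as schemes over $X$, compatibly with the $b$-morphisms.

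Alternatively, and perhaps more cleanly, I would argue via Proposition~\ref{17}. The blow up $Z:=\Bl_{\C_{\J/\I}}X$ is terminal among fractional morphisms $g\colon W\to X$ with $\O_W\C_{\J/\I}$ invertible. Its structure morphism $b\colon Z\to X$ factors through $Y$: indeed $b$ pulls back $\C_{\J/\I}$ to an invertible sheaf, and since $f_*\C'_{\J/\I}=\C_{\J/\I}$ with the relevant powers agreeing inside the total ring of fractions, $Z$ is also a fractional morphism to $Y$ for which $\O_Z\C'_{\J/\I}$ is invertible; hence by the universal property over $Y$ there is a unique $X$-morphism $Z\to\Bl_{\C'_{\J/\I}}Y$. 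Conversely $\Bl_{\C'_{\J/\I}}Y\to Y\to X$ is a fractional morphism to $X$ making $\C_{\J/\I}$ invertible (using Remark~\ref{11}.\eqref{11e} and that $Y\to X$ is bifractional, so fractionality composes correctly), giving a unique $X$-morphism the other way; the two compositions are the respective identities by terminality, so the blow ups are canonically isomorphic over $X$.

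The main obstacle I anticipate is bookkeeping rather than conceptual: making sure that the powers $\C^i$ genuinely agree when computed as products of $A$-submodules versus products of $B$-submodules of $Q(A)=Q(B)$ — this is where the hypothesis that $f$ is bifractional (so that $Q(A)=Q(B)$, not merely $Q(A)\hookrightarrow Q(B)$) is essential — and checking that a fractional morphism to $X$ obtained by composing with the bifractional $f\colon Y\to X$ is again fractional with the expected inverse image of the ideal, so that Proposition~\ref{17} applies on both sides. Once those compatibilities are in place, the identification of the two $\Proj$'s, and hence of the two blow ups over $X$, is immediate.
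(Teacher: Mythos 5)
Your first argument is essentially the paper's proof: reduce to the affine situation of Lemma~\ref{21}, observe that $C:=\Gamma(X,\C_{\J/\I})=\Gamma(Y,\C'_{\J/\I})$ is one and the same submodule of $Q(A)=Q(B)$ with the same powers whether formed over $A$ or over $B$, and identify $\Proj_A(A\oplus C\oplus C^2\oplus\cdots)$ with $\Proj_B(B\oplus C\oplus C^2\oplus\cdots)$ over $X$ (the two Rees algebras differ only in degree $0$, which $\Proj$ does not detect). The alternative route via Proposition~\ref{17} is unnecessary and, as you implicitly concede, would require the same Rees-algebra bookkeeping just to produce the morphism $\Bl_{\C_{\J/\I}}X\to Y$ needed to invoke the universal property over $Y$.
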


\begin{proof}
Reduce to an affine situation as in Lemma~\ref{21}.
Setting $C:=\Gamma(X,\C_{\J/\I})=\Gamma(Y,\C'_{\J/\I})$, we have 
\[
\Bl_{\C_{\J/\I}} X=\Proj_A(A\oplus C\oplus C^2\oplus\cdots)=\Proj_B(B\oplus C\oplus C^2\oplus\cdots)=\Bl_{\C'_{\J/\I}}Y
\]
which proves the claim.
\end{proof}

Combining Remark~\ref{45}.\eqref{45a} and Lemma~\ref{13} immediately implies the following

\begin{thm}\label{22}
Let $f:Y\to X$ be a finite bifractional morphism of schemes.
Let $\J$ and $\I$ be fractional on $Y$ and $X$ respectively.
Then $\Bl_{\C_{\J/\I}} X=Y$ if and only if $\C'_{\J/\I}$ is invertible.\qed
\end{thm}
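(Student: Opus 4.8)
The plan is to read the statement off from Lemma~\ref{13} together with Remark~\ref{45}.\eqref{45a}, both of which apply since $f$ is finite bifractional as in the hypotheses of Lemma~\ref{21}. The former identifies $\Bl_{\C_{\J/\I}}X$ with the blow up of $Y$ along $\C'_{\J/\I}$, and the latter shows both that $\O(1)$ on any blow up of a fractional ideal is invertible and that blowing up an invertible fractional ideal changes nothing. Combining the two reduces the asserted equivalence to the single fact that $\Bl_{\C'_{\J/\I}}Y=Y$ holds exactly when $\C'_{\J/\I}$ is invertible.

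In detail, I would first invoke Lemma~\ref{13} to obtain the identification $\Bl_{\C_{\J/\I}}X=\Bl_{\C'_{\J/\I}}Y=:Z'$, which by the $\Proj$-description used in its proof is compatible with the structure morphisms to $Y$, hence also with the composites to $X$ through $f$. Thus the hypothesis $\Bl_{\C_{\J/\I}}X=Y$ as schemes over $X$ is equivalent to the blow up morphism $b'\colon Z'\to Y$ being an isomorphism. By Lemma~\ref{21}, $\C'_{\J/\I}$ is a genuine fractional ideal on $Y$, so invertibility is a meaningful condition on it. If $\C'_{\J/\I}$ is invertible, then $\Bl_{\C'_{\J/\I}}Y=Y$ by Remark~\ref{45}.\eqref{45a}, and hence $\Bl_{\C_{\J/\I}}X=Y$. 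Conversely, if $b'$ is an isomorphism, then the inverse image $\O_{Z'}\C'_{\J/\I}=\O_{Z'}(1)$ of Definition~\ref{2}.\eqref{2c} is invertible by Remark~\ref{45}.\eqref{45a}, and transporting it along $(b')^{-1}$, using that $\O_Y\C'_{\J/\I}=\C'_{\J/\I}$ for the identity morphism, shows that $\C'_{\J/\I}$ itself is invertible on $Y$.

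The only delicate point is the bookkeeping of identifications: one must check that the equality in Lemma~\ref{13} respects the morphisms down to $Y$ and $X$, so that the condition $\Bl_{\C_{\J/\I}}X=Y$ translates precisely into $b'$ being an isomorphism, and that the inverse image construction of Definition~\ref{2}.\eqref{2c} carries $\O_{Z'}(1)$ back to $\C'_{\J/\I}$ under such an isomorphism. Once these routine compatibilities are settled, the equivalence follows immediately from the two cited results.
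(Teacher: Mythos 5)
Your argument is exactly the paper's: the theorem is stated there as an immediate consequence of combining Remark~\ref{45}.\eqref{45a} with Lemma~\ref{13}, which is precisely the reduction you carry out. Your additional bookkeeping of the identifications over $Y$ and $X$ is a correct and welcome expansion of what the paper leaves implicit, but it is the same proof.
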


Using Remark~\ref{18}, we recover a result of P.M.H.~Wilson~\cite[Cor.~1.4]{Wil78} with reduced hypotheses.

\begin{cor}
Let $X$ be a reduced one-dimensional locally Noetherian scheme with finite normalization $\wt X\to X$.
Then $\Bl_{\C_{\wt X/X}} X=\wt X$.
\end{cor}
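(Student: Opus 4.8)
The plan is to deduce this Corollary directly from Theorem~\ref{22} applied to the normalization morphism $f\colon\wt X\to X$. First I would verify that $f$ satisfies the hypotheses of Theorem~\ref{22}, i.e.\ that it is a finite bifractional morphism: finiteness holds by assumption, and bifractionality will follow from the fact that the normalization morphism is an isomorphism at the associated (here: generic) points of $X$ — indeed for a reduced one-dimensional scheme the associated points are exactly the generic points of the irreducible components, and $\O_{X,x}=\Q_{X,x}$ there with $f^{-1}(x)$ a single point having the same local ring. In the language of \S\ref{14}, $f$ is biassociating by Lemma~\ref{5}.\eqref{5d} (it induces a bijection on associated points and an isomorphism of local rings at the generic points), hence bifractional. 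I would then take $\J=\O_{\wt X}$ and $\I=\O_X$, so that $\C_{\J/\I}=\C_{\wt X/X}$ is the conductor of $X$ in the sense of Definition~\ref{58}, and $\C'_{\J/\I}=\C'_{\wt X/X}$ is the fractional ideal on $\wt X$ descending it.

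By Theorem~\ref{22}, $\Bl_{\C_{\wt X/X}} X=\wt X$ if and only if $\C'_{\wt X/X}$ is an invertible $\O_{\wt X}$-module. So the remaining point is to show that the conductor $\C'_{\wt X/X}$ is invertible on $\wt X$ when $\wt X$ is one-dimensional (which it is, being finite over $X$). Affine locally, writing $X=\Spec A$ and $\wt X=\Spec B$ with $B$ the integral closure of the reduced one-dimensional Noetherian ring $A$ in $Q(A)=Q(B)$, the conductor ideal $\mathfrak{c}=\Ann_A(B/A)=\C_{\wt X/X}(U)$ is a nonzero $B$-ideal contained in $A$, and one must check that $\mathfrak{c}$ is an invertible $B$-module. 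This is where I would localize at each maximal ideal of $B$: the localizations of $B$ at its maximal ideals are one-dimensional normal Noetherian local domains (or products thereof over the components), hence regular, hence discrete valuation rings, and every nonzero ideal of a DVR is principal, so $\mathfrak{c}_\nn$ is principal for each maximal ideal $\nn$ of $B$. Invertibility of $\C'_{\wt X/X}$ as a fractional ideal then follows since it has rank one (Lemma~\ref{21}, Remark~\ref{11}.\eqref{11c}) and is locally principal.

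The main obstacle — or rather, the only nontrivial input — is the local computation showing the conductor is invertible on the normalization, and this reduces to the classical fact that the localizations of a one-dimensional normal Noetherian domain at its maximal ideals are DVRs (Serre's criterion or the Krull–Akizuki circle of ideas, available once normalization is finite). One subtlety to address carefully is that $\wt X$ need not be a domain nor local, so I would work over each affine open and each maximal ideal separately, using that the normalization of a reduced ring splits, generically, as a product over the irreducible components; this causes no trouble because invertibility is a local property and the conductor localizes compatibly. I would also remark that the finiteness hypothesis is exactly what makes the normalization Nagata-type arguments of Remark~\ref{18} unnecessary to invoke explicitly — finiteness of $\wt X\to X$ is assumed outright — while noting in passing (as the paper does) that this recovers Wilson's Corollary~1.4 with reduced hypotheses.
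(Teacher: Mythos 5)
Your proposal is correct and follows essentially the same route as the paper: reduce via Theorem~\ref{22} to invertibility of the conductor on $\wt X$, and obtain that from the fact that the local rings of a one-dimensional normal Noetherian scheme are DVRs (the paper phrases this as $\O_{\wt X}$ being a sheaf of PIDs, so that \emph{every} fractional ideal on $\wt X$ is invertible). Your additional verification that the normalization is bifractional, via the identification of associated points with generic points on a reduced scheme, is a hypothesis of Theorem~\ref{22} that the paper leaves implicit, so spelling it out is a welcome refinement rather than a deviation.
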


\begin{proof}
By hypothesis $\O_{\wt X}$ a sheaf of PIDs and hence any fractional ideal on $\wt X$ is invertible.
Then the claim follows from Theorem~\ref{22}.
\end{proof}

In this context it is interesting to know when $\C'_{\J/\I}$ is reflexive.

\begin{prp}
In addition to the hypotheses of Theorem~\ref{22}, assume that $Y$ is $(S_2)$ and Gorenstein in codimension up to one.
If $\I$ is $(S_2)$, then $\C'_{\J/\I}$ is reflexive.
\end{prp}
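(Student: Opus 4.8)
The plan is to pass to affine charts and reduce the assertion to a depth estimate for $\SHom$ into an $(S_2)$-module; throughout I may assume $\I\ne0\ne\J$, the degenerate cases being trivial. Since $\SHom$ of coherent modules commutes with restriction to open subschemes, reflexivity of $\C'_{\J/\I}$ may be checked over an affine cover of $Y$. Cover $X$ by affine opens $\Spec A$ and set $\Spec B:=f^{-1}(\Spec A)$; as $f$ is finite this is an affine open of $Y$ with $B$ finite over $A$, and, these being local properties, $B$ is $(S_2)$ and Gorenstein in codimension at most one. By Lemma~\ref{21}, $\C'_{\J/\I}|_{\Spec B}$ is associated to the $B$-module $N:=\Hom_A(J,I)$, where $J:=\Gamma(\Spec B,\J)$ is finite over $A$ (being finite over $B$, which is finite over $A$) and $I:=\Gamma(\Spec A,\I)$, and $N$ is a non-zero fractional ideal of $B$, hence torsion-free of rank one. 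So it suffices to show that $N$ is a reflexive $B$-module.

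I would now apply the local criterion for reflexivity: a finite module $M$ over a Noetherian ring $R$ is reflexive if and only if $M_\pp$ is reflexive over $R_\pp$ for every $\pp$ with $\depth R_\pp\le1$ and $\depth M_\pp\ge2$ for every $\pp$ with $\depth R_\pp\ge2$ (see, e.g., \cite{stacks-project} or \cite{BH93}). Since $B$ is $(S_2)$, $\depth B_\qq=\min(2,\dim B_\qq)$ for all $\qq$, so the two cases are $\dim B_\qq\le1$ and $\dim B_\qq\ge2$. In the first case $B_\qq$ is Gorenstein of dimension at most one; over such a ring a finite torsion-free module is reflexive — in dimension one it is maximal Cohen--Macaulay, hence reflexive over a Gorenstein ring, and in dimension zero every module is reflexive by Matlis duality — and $N_\qq$ is torsion-free. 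Thus the claim reduces to showing $\depth_{B_\qq}N_\qq\ge2$ whenever $\dim B_\qq\ge2$.

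Fix such a $\qq$ and put $\pp:=\qq\cap A$. Since $A\to B$ is finite, incomparability of primes in the integral extension forces $\dim A_\pp\ge\dim B_\qq\ge2$, so the hypothesis that $\I$ is $(S_2)$ gives $\depth_{A_\pp}I_\pp\ge2$. As $J$ is finite over $A$ with $J_\pp\ne0$ and $\SHom$ commutes with localization, $N_\pp=\Hom_{A_\pp}(J_\pp,I_\pp)$, and the standard inequality $\depth_{A_\pp}\Hom_{A_\pp}(J_\pp,I_\pp)\ge\min(2,\depth_{A_\pp}I_\pp)$ — which follows by embedding the $\Hom$-module into a finite direct sum of copies of $I_\pp$ via a free presentation of $J_\pp$ and applying the depth lemma to the resulting short exact sequence, cf.\ \cite{BH93} — yields $\depth_{A_\pp}N_\pp\ge2$. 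A maximal $N_\pp$-regular sequence in $\pp A_\pp$ then maps into $\qq B_\qq$ and remains $N_\qq$-regular, since localization is exact and the quotient stays non-zero by Nakayama (as $N_\qq\ne0$ is finite over $B_\qq$ and $\pp A_\pp N_\qq\subseteq\qq B_\qq N_\qq$). Hence $\depth_{B_\qq}N_\qq\ge\depth_{A_\pp}N_\pp\ge2$, as required.

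I expect the principal obstacle to be the bookkeeping across the finite morphism $f$: identifying $N_\pp$ with a $\Hom$-module over $A_\pp$ (which needs finiteness of $J$ over $A$), transferring the $(S_2)$-property of $\I$ on $X$ to a depth bound over $A_\pp$, and descending the resulting regular sequence back to $B_\qq$. The only substantial ingredient is the depth inequality for $\SHom$ into an $(S_2)$-module; the remainder is a routine assembly of the earlier lemmas. As a cosmetic alternative one may note, via Hom--tensor adjunction together with Remark~\ref{62}, that $\C'_{\J/\I}=\SHom_{\O_Y}(\J,\C'_{\O_Y/\I})$ with $\C'_{\O_Y/\I}=f^!\I$, and that a dual module is always reflexive, so it would be enough to prove $f^!\I$ reflexive; but this rests on the very same depth estimate applied to $\Hom_A(B,I)$, so essentially nothing is gained.
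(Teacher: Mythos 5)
Your proof is correct and follows essentially the same route as the paper: reduce to the affine module $\Hom_A(J,I)$, and derive $\depth_{B_\qq}\Hom_A(J,I)_\qq\ge\min\{2,\dim B_\qq\}$ from the standard inequality $\depth\Hom_A(J,I)\ge\min\{2,\depth I\}$ transported across the finite extension $A\to B$. The only cosmetic difference is that the paper invokes \cite[Thm.~3.6]{EG85} to convert this $(S_2)$-bound into reflexivity in one step, whereas you unpack the same criterion by hand (torsion-freeness plus Gorenstein duality at primes of depth at most one).
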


\begin{proof}\pushQED{\qed}
Reduce to an affine situation as in Lemma~\ref{21}.
By \eqref{33} and \cite[Thm.~3.6]{EG85}, it suffices to show that $\Hom_A(J,I)_\qq$ is an $(S_2)$ $B_\qq$-module for any $\qq\in\Spec B$.
Setting $\pp:=\varphi^{-1}(\qq)\in\Spec A$, we may replace $A$ by $A_\pp$ with maximal ideal $\mm:=\pp A_\pp$, $\qq$ by $\nn:=\qq B_\pp$, and assume that $A$ is local.
By finiteness of $B$ over $A$, $\dim A\ge\dim B\ge\dim B_{\qq}$.
Using \cite[Prop.~1.2.10.(a), Exc.~1.4.19]{BH93} and that $I$ is $(S_2)$, this implies
\begin{align*}
\depth_{B_\qq}\Hom_A(J,I)_\qq
&\ge\grade(\mm,\Hom_A(J,I))
=\depth_A\Hom_A(J,I)\\
&\ge\min\{2,\depth_AI\}
=\min\{2,\dim A\}\\
&\ge\min\{2,\dim B\}\ge\min\{2,\dim B_{\qq}\}.\qedhere
\end{align*}
\end{proof}

\section{Blowup of canonical ideals}\label{42}

Let $X$ be a \emph{Cohen--Macaulay scheme}.
That is, $X$ is locally Noetherian and $\O_{X,x}$ is a Cohen--Macaulay ring for all (closed) points $x\in X$ (see \cite[\href{http://stacks.math.columbia.edu/tag/02IP}{Lem.~02IP}]{stacks-project}).
The condition on closed points suffices due to Lemma~\ref{56} and since the Cohen--Macaulay property localizes (see \cite[Thm.~2.1.3.(b)]{BH93}).
We use an analogous definition for a \emph{Gorenstein scheme} (see \cite[\href{http://stacks.math.columbia.edu/tag/0AWW}{Def.~0AWW}]{stacks-project}).
By a \emph{canonical module} $\omega_X$ on a Cohen--Macaulay scheme $X$ we mean a coherent $\O_X$-module such that $\omega_{X,x}$ is a canonical module for $\O_{X,x}$ in the sense of \cite[Def.~3.3.1]{BH93} for all (closed) points $x\in X$.
Recall that by \cite[Thm.3.3.5.(b)]{BH93} the canonical module localizes.
If $\omega_X$ is isomorphic to a fractional ideal we call it a \emph{canonical ideal}.

\begin{lem}\label{52}
Let $X$ be a Cohen--Macaulay scheme with canonical module $\omega_X$.
Assume that $\omega_X$ has a global rank.
Then $\omega_X$ is a canonical ideal.
\end{lem}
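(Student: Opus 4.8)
The plan is to reduce to the local case and then produce the required isomorphism from a canonical module to a fractional ideal by exploiting the fact that $\omega_X$ has a global rank equal to one. First, by Remark~\ref{11}.\eqref{11c} and the fact that a nonzero fractional ideal has rank $1$, one should first observe that the global rank of $\omega_X$ is necessarily $1$: since $X$ is Cohen--Macaulay and reduced (a canonical module being torsion-free of rank one over each local ring by \cite[Thm.~3.3.5 and Prop.~1.4.3]{BH93}), the hypothesis $\omega_X\otimes_{\O_X}\Q_X\cong\Q_X^r$ forces $r=1$ after localizing at each associated point via Lemma~\ref{23}.\eqref{23c} and Lemma~\ref{4}.\eqref{4a}. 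So from the start we may write $\omega_X\otimes_{\O_X}\Q_X\cong\Q_X$.

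Next I would fix such an isomorphism $\varphi\colon\omega_X\otimes_{\O_X}\Q_X\xrightarrow{\ \sim\ }\Q_X$ and compose the natural map $\omega_X\to\omega_X\otimes_{\O_X}\Q_X$ (injective because $\omega_X$ is torsion-free, being maximal Cohen--Macaulay over each reduced Cohen--Macaulay local ring) with $\varphi$ to obtain an $\O_X$-linear injection $\omega_X\into\Q_X$. The image $\I\subset\Q_X$ is then a coherent $\O_X$-submodule isomorphic to $\omega_X$; it is coherent because $\omega_X$ is. By Lemma~\ref{23}.\eqref{23b} the submodule $\I\subset\Q_X$ has a (local, equivalently global) rank, namely $1$, so by Definition~\ref{2}.\eqref{2a} it is a fractional ideal on $X$. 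Thus $\omega_X\cong\I$ exhibits $\omega_X$ as a canonical ideal, which is the assertion.

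The one point deserving care is the existence of the global injection $\omega_X\into\Q_X$: a priori $\varphi$ only identifies the \emph{generic} tensor product with $\Q_X$, and one must check that the composite $\omega_X\to\omega_X\otimes_{\O_X}\Q_X\xrightarrow{\varphi}\Q_X$ is injective and has coherent image with a rank. Injectivity follows once one knows $\omega_X\to\omega_X\otimes_{\O_X}\Q_X$ is injective, which by Lemma~\ref{9}.\eqref{9a} amounts to injectivity of $\M_x\to\varprojlim_{x'\in X'_x}\M_{x'}$ for $\M=\omega_X$ at each $x$; this holds because $\omega_{X,x}$ is torsion-free over the reduced ring $\O_{X,x}$, so no nonzero element is killed by passing to the localizations at the minimal primes below $\mm_{X,x}$. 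Coherence of the image is automatic, and the existence of a rank is Lemma~\ref{23}.\eqref{23b}. The main obstacle is therefore just this reduced/torsion-free bookkeeping, which is routine; no genuinely hard step is involved once Lemma~\ref{23} is in hand.
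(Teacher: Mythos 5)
Your overall strategy coincides with the paper's: show the rank is $1$, use torsion-freeness of the stalks to see that the canonical morphism $\omega_X\to\omega_X\otimes_{\O_X}\Q_X\cong\Q_X$ is injective, and identify $\omega_X$ with its image, a coherent rank-one submodule of $\Q_X$. But there is a genuine error in the justification: you assert that $X$ is reduced and lean on this twice (for torsion-freeness of $\omega_{X,x}$, and for injectivity of $\omega_{X,x}\to\varprojlim_{x'\in X'_x}\omega_{X,x'}$). Reducedness is not a hypothesis of the lemma and does not follow from the hypotheses: $X=\Spec k[x]/(x^2)$ is Cohen--Macaulay with canonical module $\O_X$ of global rank $1$, yet non-reduced. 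The parenthetical you offer does not prove reducedness, and its claim that a canonical module is ``of rank one over each local ring'' is itself not automatic --- a canonical module has a rank if and only if the ring is generically Gorenstein, and only then is that rank $1$; this is \cite[Prop.~3.3.18]{BH93}, which is precisely the input the paper uses. Your alternative appeal to Remark~\ref{11}.\eqref{11c} is circular, since it presupposes that $\omega_X$ is a fractional ideal, which is the conclusion being proved.

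The error is repairable without changing the architecture. Over a Cohen--Macaulay local ring $A$, any maximal Cohen--Macaulay module $M$ satisfies $\Ass M\subset\Ass A$ (for $\pp\in\Ass M$ one has $\dim A/\pp\ge\depth M=\dim A$, so $\pp$ is minimal), hence every $A$-regular element is $M$-regular and $M$ is torsion free with no reducedness needed. The kernel of $\omega_{X,x}\to\varprojlim_{x'\in X'_x}\omega_{X,x'}$ is exactly the torsion submodule (an element of the kernel is annihilated by something outside each associated prime, hence, by prime avoidance, by a regular element), so injectivity holds. With $r=1$ supplied by Lemma~\ref{23}.\eqref{23c} together with \cite[Prop.~3.3.18]{BH93} in place of the reducedness claim, the remainder of your argument is correct and agrees with the paper's proof.
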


\begin{proof}
By Lemma~\ref{23}.\eqref{23c} and \cite[Prop.~3.3.18]{BH93}, $\rk\omega_{X,x}=1$, that is, $X$ is generically Gorenstein.
Since $\omega_{X,x}$ is a maximal Cohen--Macaulay module it is torsion free and hence taking stalks of the canonical morphism $\omega_X\to\omega_X\otimes_{\O_X}\Q_X\cong\Q_X$ yields the claim.
\end{proof}

Let $X$ be a Cohen--Macaulay scheme with canonical ideal $\omega_X$.
Let $f:Y\to X$ be a finite bifractional morphism of schemes.
We abbreviate 
\[
\omega_Y:=f^!\omega_X.
\]
By Lemma~\ref{21} and Remark~\ref{62}, $\SHom_{\O_X}(f_*\O_Y,\omega_X)$ and $\omega_Y$ are fractional ideals on $X$ and $Y$ respectively and related by
\begin{equation}\label{15}
f_*\omega_Y=\SHom_{\O_X}(f_*\O_Y,\omega_X).
\end{equation}

\begin{dfn}
We say that a morphism $f:Y\to X$ is \emph{equidimensional along fibers of closed points} if, for all closed points $x\in X$, $\dim\O_{Y,y}$ is independent of $y\in f^{-1}(x)$.
\end{dfn}

\begin{prp}\label{12}
Let $f:Y\to X$ be a finite bifractional morphism of Cohen--Macaulay schemes which is equidimensional along fibers of closed points and let $\omega_X$ be a canonical ideal on $X$.
Then $\omega_Y$ is invertible if and only if $Y$ is Gorenstein.
\end{prp}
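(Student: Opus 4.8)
The plan is to reduce immediately to the local situation and then invoke the standard characterization of when a finite extension is Gorenstein in terms of its dualizing module being cyclic (equivalently invertible). First I would reduce as in Lemma~\ref{21}: write $X=\Spec A$, $Y=\Spec B$ with $\varphi\colon A\into B$ finite and birational in the bifractional sense, so $Q(A)=Q(B)$; set $\omega=\Gamma(X,\omega_X)$, a canonical fractional ideal of $A$ of rank one, and $J:=\Gamma(Y,\O_Y)=B$ viewed as a finite $A$-module. By Remark~\ref{62} and \eqref{15}, $\omega_Y=f^!\omega_X$ is the $\O_Y$-module associated to $\Hom_A(B,\omega)$, which by Lemma~\ref{21} is again a fractional ideal on $Y$, i.e. a finite $B$-module. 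Since invertibility of a coherent fractional ideal is an affine-local, even stalk-local, property (Remark~\ref{11}.\eqref{11c} together with Lemma~\ref{23}.\eqref{23b}), and likewise ``Gorenstein'' is checked at closed points, I may localize $A$ at a maximal ideal $\mm$, replace $B$ by its semilocalization $B\otimes_A A_\mm$, and reduce to: $A$ local Cohen--Macaulay with canonical module $\omega$, $B$ a finite $A$-algebra that is Cohen--Macaulay (hence a product of local CM rings at the maximal ideals of $B$ lying over $\mm$), and the claim becomes that $\Hom_A(B,\omega)$ is an invertible $B$-module iff $B$ is Gorenstein.

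The key input is that $\Hom_A(B,\omega)$ is a canonical module for $B$. This is where the equidimensionality hypothesis enters: the transfer of canonical modules along a finite morphism $\varphi\colon A\to B$ gives $\omega_B\cong\Hom_A(B,\omega_A)$ precisely when $B$ is equidimensional over $A$ in the sense that $\dim\O_{Y,y}$ is constant over the fibre of the closed point (see \cite[Thm.~3.3.7 and Exc.~3.3.ZZ]{BH93} for the local statement; one needs $\dim B_\qq = \dim A$ for every maximal $\qq$ of $B$, together with $B$ being CM, so that $\Hom_A(B,\omega_A)$ is maximal Cohen--Macaulay over $B$ of the right depth and satisfies the defining $\mathrm{Ext}$-vanishing and endomorphism conditions of a canonical module). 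Having established $\omega_B\cong\Hom_A(B,\omega)$, the statement reduces to the well-known fact \cite[Thm.~3.3.7]{BH93}: a Cohen--Macaulay local ring is Gorenstein if and only if its canonical module is free of rank one; and over the semilocal $B$ (finite product of local rings) ``free of rank one at each maximal ideal'' is the same as ``invertible,'' since $\omega_B$ has rank one by Lemma~\ref{52}/\cite[Prop.~3.3.18]{BH93}. Translating back through $\wt{(-)}$ and reassembling the affine pieces gives that $\omega_Y$ is invertible iff $\O_{Y,y}$ is Gorenstein at every closed point $y$, i.e. iff $Y$ is Gorenstein.

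I expect the main obstacle to be the clean justification of $\omega_B\cong\Hom_A(B,\omega_A)$ in exactly the generality needed, i.e. verifying that the equidimensionality-along-closed-fibres hypothesis is precisely what makes $\Hom_A(B,\omega_A)$ satisfy the axioms of a canonical module over the (possibly non-local, but semilocal) ring $B$. One must be careful that the dimension formula $\dim A = \dim B_\qq$ holds for \emph{every} maximal ideal $\qq$ of $B$ over $\mm$ — which is exactly equidimensionality of the fibre — since otherwise $\Hom_A(B,\omega_A)$ fails to be maximal Cohen--Macaulay at the short components and cannot be a canonical module there. The remaining steps — the local Gorenstein criterion, the rank-one observation, and the affine-local-to-global passage for invertibility — are routine given the results already assembled in the paper.
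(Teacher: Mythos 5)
Your overall strategy coincides with the paper's: reduce to $A$ local Cohen--Macaulay with canonical module $\omega_A$ and $B$ a finite semilocal $A$-algebra, identify $\Hom_A(B,\omega_A)$ locally with the canonical module of $B$, and then apply the criterion that a Cohen--Macaulay local ring is Gorenstein if and only if its canonical module is free of rank one. However, there is a genuine gap at exactly the step you yourself flag as ``the main obstacle,'' and the one concrete claim you make to bridge it is false: a semilocal ring is \emph{not} in general a product of local rings at its maximal ideals. For instance, the normalization of the local ring of a nodal curve at the node is a semilocal \emph{domain} with two maximal ideals, so it admits no such product decomposition. Consequently you cannot directly split $B$ into local factors and apply \cite[Thm.~3.3.7]{BH93} factor by factor, and the identification of $\Hom_A(B,\omega_A)_{\qq}$ with a canonical module of $B_\qq$ for each maximal ideal $\qq$ of $B$ remains unproved ($B_\qq$ is not finite over $A$, so the theorem does not apply to it directly either).

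The paper's proof (Proposition~\ref{12} together with Lemma~\ref{16}) resolves precisely this point by passing to completions: invertibility of $\omega_Y$ at a closed point $y$ is equivalent, by faithful flatness of completion, to $\wh{\omega_{Y,y}}\cong\wh{\O_{Y,y}}$, and after completing one \emph{does} get a product decomposition $\wh B=\prod_{i}\wh{B_{\qq_i}}$ (\cite[Thm.~8.15]{Mat89}). Each factor $\wh{B_{\qq_i}}$ is a finite local $\wh A$-algebra with $\dim\wh{B_{\qq_i}}=\dim\wh A$ by the equidimensionality hypothesis, so \cite[Thm.~3.3.7.(b)]{BH93} applies to each factor and yields $\Hom_{\wh A}(\wh B,\omega_{\wh A})=\bigoplus_i\omega_{\wh{B_{\qq_i}}}$, whence $\wh{\omega_{Y,y}}$ is a canonical module for $\wh{\O_{Y,y}}$; the Gorenstein criterion is then applied to the complete local rings. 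Your correct identification of where equidimensionality enters ($\dim B_{\qq_i}=\dim A$ for \emph{every} $\qq_i$ over $\mm$) shows you see the right mechanism, but to complete the argument you need the completion-and-decomposition device (or an equivalent one) rather than the erroneous product decomposition of the uncompleted semilocal ring.
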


\begin{proof}
Since $\omega_Y$ is coherent, it is invertible if and only if $\omega_{Y,y}\cong\O_{Y,y}$ for all closed $y\in Y$.
By faithful flatness of completion, this is equivalent to  $\wh{\omega_{Y,y}}\cong\wh{\O_{Y,y}}$ for all closed $y\in Y$.
By \cite[Prop.~3.1.19, Thm.~3.3.7]{BH93} and Lemma~\ref{16} below, this is equivalent to $Y$ being Gorenstein.
\end{proof}

\begin{lem}\label{57}
Let $f:Y\to X$ be a finite bifractional morphism of schemes.
Then $y\in Y$ is closed if and only $x=f(y)\in X$ is closed.
\end{lem}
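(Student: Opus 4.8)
The plan is to prove both implications by reducing to the local situation and tracking specializations carefully. First I would recall from Lemma~\ref{5}.\eqref{5d} that a biassociating morphism is bifractional; the present statement concerns a finite bifractional $f$, so I want to reverse this, showing under finiteness that bifractional implies biassociating, or at least establish enough of the dictionary to compare closed points. The key observation is that a finite morphism is closed and affine, and a finite morphism of Noetherian schemes has the going-up property, so fibers of closed points consist of closed points. Concretely, reduce to $X=\Spec A$ and $Y=\Spec B$ with $\varphi\colon A\into B$ finite and $Q(A)=Q(B)$ as in diagram~\eqref{43}.

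The forward direction is the easy one: if $y=\qq\in Y$ is closed, then $\qq$ is a maximal ideal of $B$; since $\varphi$ is finite (hence integral), $\pp:=\varphi^{-1}(\qq)=f(y)$ is a maximal ideal of $A$ by the standard fact that for an integral extension $A\subset B$, a prime of $B$ is maximal if and only if its contraction to $A$ is maximal (\cite[Prop.~4.15]{stacks-project}-type statement, or \cite[Cor.~2.2.12]{BH93}). Hence $x=f(y)$ is closed. For the converse, suppose $x=\pp\in X$ is closed, i.e.\ $\mm:=\pp$ is maximal in $A$, and let $\qq\in f^{-1}(\pp)$; such $\qq$ exists because $f$ is dominant in the fractional sense ($A\into B$ gives a surjection on spectra over the image, but more directly $\varphi$ is injective so $f$ is surjective when combined with $\Spec B\to\Spec A$ being closed of the same dimension — actually I should just use that $\varphi$ integral injective makes $f$ surjective). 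Then again by the integral-extension fact, maximality of $\varphi^{-1}(\qq)=\mm$ forces $\qq$ to be maximal in $B$, so $y=\qq$ is closed. This handles the only-if direction entirely.

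The main obstacle is really bookkeeping rather than depth: I must make sure that "finite bifractional" is enough, and in particular that I do not secretly need the biassociating hypothesis. The point where care is needed is surjectivity of $f$ on the relevant locus — I want that every closed point of $X$ has a preimage in $Y$. This follows because $\varphi\colon A\to B$ is injective (from $A\into Q(A)=Q(B)$ and $B\into Q(B)$, the composite $A\to B$ is the restriction of an injection, hence injective) and integral, so $\Spec B\to\Spec A$ is surjective by lying-over (\cite[Tag 00GQ]{stacks-project}). The rest is the purely formal integral-extensions argument above. I would present the proof in the affine reduction, invoking Lemma~\ref{4}.\eqref{4b} only implicitly (the bifractionality is used just to get $Q(A)=Q(B)$, hence the injectivity of $\varphi$), state the two directions via going-up and the maximality criterion for integral extensions, and conclude.

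\begin{proof}
Being finite, $f$ is affine, so we may assume $X=\Spec A$ and $Y=\Spec B$ with $\varphi\colon A\to B$ finite, and $y=\qq$, $x=\pp=\varphi^{-1}(\qq)$ for primes $\qq\subset B$, $\pp\subset A$. Since $f$ is bifractional, by Lemma~\ref{4}.\eqref{4b} the induced isomorphism $\Q_X\cong f_*\Q_Y$ gives $Q(A)=Q(B)$; as $A\into Q(A)=Q(B)$ and $B\into Q(B)$, the map $\varphi$ is injective. Hence $\varphi$ is an injective integral ring map, so $\Spec B\to\Spec A$ is surjective and satisfies going-up (\cite[Tag 00GQ, Tag 00GU]{stacks-project}).

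If $\qq$ is maximal, then for the integral extension $\varphi$ the contraction $\pp=\varphi^{-1}(\qq)$ is maximal as well (\cite[Cor.~2.2.12]{BH93}), so $x$ is closed. Conversely, if $\pp$ is maximal, then by the same criterion every prime of $B$ lying over $\pp$ is maximal; since $\varphi^{-1}(\qq)=\pp$, the prime $\qq$ is maximal and $y$ is closed. Finally, by Lemma~\ref{56} "closed" and "maximal in the affine chart" coincide for our locally Noetherian schemes, which completes the proof.
\end{proof}
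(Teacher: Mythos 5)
Your affine reduction and the ring-theoretic core (injectivity of $\varphi$ via diagram~\eqref{43}, integrality, and the criterion that for an integral extension a prime of $B$ is maximal if and only if its contraction to $A$ is maximal) are correct and are exactly the tools the paper uses. The gap is in your final sentence: it is \emph{not} true that ``closed in the scheme'' and ``maximal in the affine chart'' coincide for locally Noetherian schemes, and Lemma~\ref{56} does not assert this. A point that is maximal in $\Gamma(U,\O_X)$ for some affine open $U$ need not be closed in $X$: take $X=\Spec R$ for a discrete valuation ring $R$ and $U$ the affine open consisting of the generic point, which is the spectrum of a field. Only one implication holds in general, namely that a point closed in $X$ is closed, hence maximal, in every affine chart containing it. Both directions of your argument terminate with the false implication ``maximal in the chart $\Rightarrow$ closed in the scheme'', so neither is complete as written.

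The missing content is precisely what the paper compresses into ``going up, incomparability and Lemma~\ref{56}'': one must argue with specializations rather than inside a single fixed chart. For ($x$ closed $\Rightarrow$ $y$ closed): a strict specialization $y\leadsto y'$ in $Y$ maps into $\overline{\{x\}}=\{x\}$, so $y'$ lies in the same chart $f^{-1}(U)$ and over the same $\pp$ as $y$, and $\qq\subsetneq\qq'$ over one prime contradicts incomparability. For ($y$ closed $\Rightarrow$ $x$ closed): if $x$ is not closed it admits a strict specialization $x\leadsto x'$ (to a closed point, by Lemma~\ref{56}); choosing the affine chart to contain $x'$, and hence $x$, one gets $\pp\subsetneq\pp'$, which going up lifts to a strict specialization $\qq\subsetneq\qq'$ of $y$ in $Y$, contradicting closedness of $y$. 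The maximality criterion alone cannot substitute for this, because it only compares $\qq$ and $\pp$ within one chart chosen in advance. (A minor point: the surjectivity and lying-over discussion in your plan is superfluous, since the lemma concerns a given $y$ and its image $x=f(y)$, not the existence of preimages.)
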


\begin{proof}
By hypothesis $f$ corresponds affine locally in the target to an integral extension $\varphi$ as in \eqref{43}.
Here the going up and incomparability theorem apply and Lemma~\ref{56} implies the claim.
\end{proof}

\begin{lem}\label{16}
Under the hypotheses of Proposition~\ref{12}, $\wh{\omega_{Y,y}}$ is a canonical module for $\wh{\O_{Y,y}}$ for all closed $y\in Y$.
\end{lem}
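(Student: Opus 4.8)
The plan is to reduce the statement to a purely local question about $\widehat{\O_{Y,y}}$ and then identify $\widehat{\omega_{Y,y}}$ with the module produced by the finite morphism $f$ via the functor $f^!$. First I would localize: fix a closed $y\in Y$ and set $x:=f(y)$, which is closed in $X$ by Lemma~\ref{57}. Passing to an affine chart, $f$ is given by a finite ring extension $\varphi\colon A\to B$ as in \eqref{43}, and after localizing at $\pp:=\varphi^{-1}(\qq)$ (where $\qq$ corresponds to $y$) we may assume $A$ is local with maximal ideal $\mm$, and $B$ is semilocal and finite over $A$. By \eqref{15} and \eqref{33}, the stalk $\omega_{Y,y}$ is obtained by localizing $\Hom_A(B,\omega_A)$ at $\qq$, where $\omega_A$ is a canonical module for $A$ (using that the canonical module localizes, by \cite[Thm.~3.3.5.(b)]{BH93}).

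The key step is then to invoke the standard fact that for a finite local (or semilocal) homomorphism $\varphi\colon A\to B$ of Cohen--Macaulay rings with $\dim B_\qq=\dim A$ for all maximal $\qq$ of $B$, the module $\Hom_A(B,\omega_A)$ is a canonical module for $B$; this is \cite[Thm.~3.3.7]{BH93}. This is precisely where the hypothesis that $f$ is equidimensional along fibers of closed points enters: it guarantees $\dim B_\qq$ is the same for all $\qq$ over $\mm$, and since $f$ is finite bifractional the total fraction ring is preserved (so $\varphi$ is generically an isomorphism, forcing that common dimension to equal $\dim A$). Both $A$ and $B_\qq$ are Cohen--Macaulay by hypothesis on $X$ and $Y$. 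Hence $\Hom_A(B,\omega_A)\otimes_A B_\qq$, equivalently $\omega_{Y,y}$, is a canonical module for $B_\qq=\O_{Y,y}$.

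Finally I would pass to completion: by \cite[Thm.~3.3.5.(a)]{BH93} (compatibility of the canonical module with completion along a maximal ideal), $\widehat{\omega_{Y,y}}=\omega_{Y,y}\otimes_{\O_{Y,y}}\widehat{\O_{Y,y}}$ is a canonical module for $\widehat{\O_{Y,y}}$, which is what is claimed. The main obstacle is the bookkeeping around dimensions: one must check carefully that the dimension condition needed in \cite[Thm.~3.3.7]{BH93} (namely $\dim B_\qq=\dim A$) follows from ``equidimensional along fibers of closed points'' together with bifractionality — the bifractional hypothesis gives a regular element of $A$ becoming a regular element of $B$ that is a unit in $Q(A)=Q(B)$, so $\varphi$ is injective with torsion-free cokernel-free behaviour generically, pinning down the relative dimension to zero. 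Everything else is a direct citation of the cited results in \cite{BH93}.
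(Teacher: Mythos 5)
Your overall strategy matches the paper's --- localize at a closed point, reduce to a finite extension $\varphi\colon A\into B$ with $A$ local, and feed $\Hom_A(B,\omega_A)$ into \cite[Thm.~3.3.7]{BH93} --- but there is a genuine gap at precisely the step you call ``the key step''. The theorem you cite concerns a \emph{local} homomorphism whose target is module-\emph{finite} over the base. Here $B$ is only semilocal: the fiber $f^{-1}(x)=\{\qq_1,\dots,\qq_n\}$ may contain several points (e.g.\ the normalization of a nodal curve), and the further localization $B_\qq$ is \emph{not} finite over $A$. So neither $A\to B$ nor $A\to B_\qq$ falls within the scope of the cited result, and the assertion that $\Hom_A(B,\omega_A)_\qq$ is a canonical module for $B_\qq$ is exactly the content that still needs proof; your parenthetical ``(or semilocal)'' papers over this.

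The paper resolves the difficulty by reversing the order of your last two steps: it completes \emph{first}. Since $\pp B$ and the Jacobson radical of $B$ define the same topology, $B\otimes_A\wh A=\wh B$, and \cite[Thm.~8.15]{Mat89} gives the product decomposition $\wh B=\prod_{i}\wh{B_{\qq_i}}$. Each factor $\wh{B_{\qq_i}}$ \emph{is} local and finite over $\wh A$ with $\dim\wh{B_{\qq_i}}=\dim\wh A$ --- this is where equidimensionality along the fiber, together with $\dim B=\dim A$ for the injective integral extension $\varphi$, enters --- so \cite[Thm.~3.3.7.(b)]{BH93} applies factor by factor, yielding $\Hom_{\wh A}(\wh B,\omega_{\wh A})=\bigoplus_i\omega_{\wh{B_{\qq_i}}}$; projecting onto the factor at $\qq$ gives the claim. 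Note that the lemma only asserts the statement after completion, so once the argument is run in this order your final appeal to \cite[Thm.~3.3.5.(a)]{BH93} is not needed. Your dimension bookkeeping is essentially correct (though $\dim B=\dim A$ comes from injectivity plus integrality, not from $\varphi$ being ``generically an isomorphism''), and the initial reduction via Lemma~\ref{57} and \eqref{15} is fine.
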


\begin{proof}
Let $y\in Y$ closed.
Then $x=f(y)\in X$ is closed by Lemma~\ref{57}.
By hypothesis $f$ is finite hence quasifinite and affine.
Then $x$ has finite preimage $f^{-1}(x)=\{y=y_1,y_2,\dots,y_n\}$.
We may assume that both $Y=\Spec B$ and $X=\Spec A$ are affine and identify $x=\pp\in\Spec A$, $y=\qq,y_i=\qq_i\in\Spec B$.
Since $f$ is finite bifractional, $\varphi:A\into B$ in \eqref{43} and hence $\varphi_\pp:A_\pp\into B_\pp$ is a finite extension.
Setting $\omega_A=\Gamma(X,\omega_X)$ and $\omega_B=\Gamma(Y,\omega_Y)$, \eqref{15} reads 
$\omega_B=\Hom_A(B,\omega_A)$ and hence $\omega_{B,\pp}=\Hom_{A_\pp}(B_\pp,\omega_{A_\pp})$.
We may therefore assume that $A=(A,\pp)$ is local with canonical module $\omega_A$.

Since $B$ is integral over $A$, $\dim A=\dim B\ge\dim B_{\qq_i}$ with equality for some $i$.
Assuming $y$ closed means that $\qq\lhd B$ is maximal which is equivalent to $\pp\lhd A$ being maximal by \cite[Cor.~5.8]{AM69}.
It follows that $B$ is semilocal with maximal ideals $\qq_1,\dots,\qq_n\lhd B$.
Using the equidimensionality hypothesis it follows that $\dim B_{\qq_i}=\dim A$ for all $i$.
The ideal $\pp B$ defines the same topology as the Jacobson radical $\bigcap_{i=1}^n\qq_i$ and hence $B\otimes_A\wh A=\wh B$.
By \cite[Thm.~8.15]{Mat89}, there is a product decomposition
\begin{equation}\label{60}
\wh B=\prod_{i=1}^n\wh{B_{\qq_i}}.
\end{equation}
Then each $\wh{B_{\qq_i}}$ is a finite $\wh A$-module of dimension $\dim\wh{B_{\qq_i}}=\dim \wh A$.

Since $B$ is finitely presented and $\wh A$ is flat over $A$, \cite[Thm.~3.3.5.(c), Thm.~3.3.7.(b)]{BH93} yields that
\begin{align*}
\omega_B\otimes_B\wh B
&=\Hom_A(B,\omega_A)\otimes_B\wh B
=\Hom_A(B,\omega_A)\otimes_A\wh A
=\Hom_{\wh A}(B\otimes_A\wh A,\omega_A\otimes_A\wh A)\\
&=\Hom_{\wh A}(\prod_{i=1}^n\wh{B_{\qq_i}},\omega_{\wh A})
=\bigoplus_{i=1}^n\Hom_{\wh A}(\wh{B_{\qq_i}},\omega_{\wh A})
=\bigoplus_{i=1}^n\omega_{\wh{B_{\qq_i}}}.
\end{align*}
The claim then follows by applying $-\otimes_{\wh B}\wh{B_\qq}$:
\[
\wh{\omega_{Y,y}}=\wh{\omega_{B,\qq_1}}=\omega_B\otimes_BB_\qq\otimes_{B_\qq}\wh{B_\qq}=\omega_B\otimes_B\wh{B_\qq}=\omega_B\otimes_B\wh B\otimes_{\wh B}\wh{B_\qq}=\omega_{\wh{B_\qq}}=\omega_{\wh{\O_{Y,y}}}.
\]
\end{proof}

We now generalize a result of P.M.H.~Wilson~\cite[Thm.~2.7, Rem.~2.8]{Wil78}.

\begin{thm}\label{47}
Let $f:Y\to X$ be a finite bifractional morphism of schemes.
Assume that $X$ is Cohen--Macaulay with canonical ideal $\omega_X$.
Then $\Bl_{f_*f^!\omega_X} X=Y$ if and only if $f^!\omega_X$ is invertible. 
The latter is equivalent to $Y$ being Gorenstein if $Y$ is Cohen--Macaulay and $f$ is equidimensional along fibers of closed points.
\end{thm}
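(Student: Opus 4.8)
The plan is to specialize the conductor machinery of \S\ref{14} to the pair $\J=\O_Y$, $\I=\omega_X$ and then to invoke Theorem~\ref{22} and Proposition~\ref{12}. First I would note that $\O_Y$ is a fractional ideal on $Y$: it is a coherent $\O_Y$-submodule of $\Q_Y$ of constant local rank $1$, hence of global rank $1$ by Lemma~\ref{23}.\eqref{23b}, so Definition~\ref{2}.\eqref{2a} applies. Since $\omega_X$ is a canonical ideal it is (isomorphic to) a fractional ideal on $X$, and $f$ is finite and bifractional by hypothesis. Thus Lemma~\ref{21} applies with these choices and produces a fractional ideal $\C'_{\O_Y/\omega_X}$ on $Y$ with $f_*(\C'_{\O_Y/\omega_X})=\C_{\O_Y/\omega_X}=\SHom_{\O_X}(f_*\O_Y,\omega_X)$. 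By Remark~\ref{62} this fractional ideal is precisely $f^!\omega_X$, and by \eqref{15} its push-forward is $f_*f^!\omega_X$.

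Granting this dictionary, the first equivalence is immediate from Theorem~\ref{22} applied to $\J=\O_Y$ and $\I=\omega_X$: it asserts $\Bl_{\C_{\O_Y/\omega_X}}X=Y$ if and only if $\C'_{\O_Y/\omega_X}$ is invertible, which by the previous paragraph reads $\Bl_{f_*f^!\omega_X}X=Y$ if and only if $f^!\omega_X$ is invertible. For the final sentence, under the additional hypotheses that $Y$ is Cohen--Macaulay and $f$ is equidimensional along fibers of closed points, Proposition~\ref{12} gives at once that $f^!\omega_X$ is invertible if and only if $Y$ is Gorenstein; combined with the first equivalence this finishes the proof.

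Since every ingredient has already been established, I expect no genuine obstacle: the theorem is an assembly of Theorem~\ref{22} and Proposition~\ref{12}. The only point requiring care is the bookkeeping -- verifying the hypotheses of Lemma~\ref{21}, Theorem~\ref{22}, and Proposition~\ref{12} (in particular that $\O_Y$ and $\omega_X$ are genuine fractional ideals, and that the equidimensionality and Cohen--Macaulay hypotheses are only invoked for the last clause) and correctly matching the objects $\C_{\O_Y/\omega_X}$, $\C'_{\O_Y/\omega_X}$, $f^!\omega_X$, and $f_*f^!\omega_X$ across Lemma~\ref{21}, Remark~\ref{62}, and \eqref{15}.
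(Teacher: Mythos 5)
Your proposal is correct and follows exactly the paper's route: the paper's proof is the one-line "This follows from Theorem~\ref{22} and Proposition~\ref{12}," and your argument simply makes explicit the bookkeeping (via Lemma~\ref{21}, Remark~\ref{62}, and \eqref{15}) that the paper leaves implicit. Nothing is missing.
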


\begin{proof}
This follows from Theorem~\ref{22} and Proposition~\ref{12}.
\end{proof}

Taking Remark~\ref{18} into account, Theorem~\ref{48} in \S\ref{53} is now a consequence of Theorem~\ref{47} and the following result.

\begin{prp}\label{54}
Let $X$ be a reduced Cohen--Macaulay scheme.
If $f:Y\to X$ is finite bifractional, then it is equidimensional along fibers of closed points.
\end{prp}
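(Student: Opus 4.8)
The plan is to reduce to the affine, local case and then to prove the equidimensionality by tracking how dimension behaves under the finite extension $\varphi:A\into B$ coming from a bifractional morphism. First I would invoke Lemma~\ref{57} to see that a closed point $x\in X$ has closed preimages only, and that it suffices to work affine locally: choose $X=\Spec A$ and $Y=\Spec B$ with $\varphi:A\into B$ finite and injective (since $f$ is bifractional, $\varphi$ becomes the inclusion after inverting regular elements, in particular is injective). Localizing at $x=\pp$ we may assume $(A,\pp)$ is local, and then the fibre $f^{-1}(x)=\{\qq_1,\dots,\qq_n\}$ consists of the finitely many maximal ideals of the semilocal ring $B$ lying over $\pp$ (using going-up and \cite[Cor.~5.8]{AM69} as in the proof of Lemma~\ref{16}). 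The goal becomes: $\dim B_{\qq_i}$ is independent of $i$.

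The key point is that reducedness of $X$ forces $A$ (hence, after the reduction, the local ring $A_\pp$) to be reduced, so $A$ is a reduced equidimensional Cohen--Macaulay local ring. Being Cohen--Macaulay and reduced, $A$ is in particular equidimensional and unmixed: every associated/minimal prime of $A$ has coheight equal to $\dim A$. Because $A$ is Cohen--Macaulay local it is \emph{catenary}, and because it is unmixed it satisfies the ``dimension formula'' $\dim A/\mathfrak{p}_0 = \dim A$ for every minimal prime $\mathfrak{p}_0$. Now $B$ is module-finite over $A$, so by the going-up/lying-over dimension theory $\dim B_{\qq_i} = \dim A_{\varphi^{-1}(\qq_i)} = \dim A$; more precisely, for a finite extension one has $\dim B_{\qq_i} = \dim (B/\mathfrak{q}_0)$ for a suitable minimal prime $\mathfrak{q}_0$ of $B$ contained in $\qq_i$, and $\dim(B/\mathfrak{q}_0) = \dim(A/(\mathfrak{q}_0\cap A))$ by the finiteness of $B/\mathfrak{q}_0$ over $A/(\mathfrak{q}_0\cap A)$, and $\mathfrak{q}_0\cap A$ is a minimal prime of $A$ (since $\varphi$ is injective and finite, contraction sends minimal primes to minimal primes). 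By unmixedness of the Cohen--Macaulay ring $A$, that last dimension is $\dim A$, independent of $i$. Hence all the $\dim B_{\qq_i}$ agree, which is exactly equidimensionality along the fibre of $x$.

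Concretely the chain of reasoning I would write is: (i) reduce to $(A,\pp)$ local reduced Cohen--Macaulay and $\varphi:A\into B$ finite injective; (ii) note $f^{-1}(x)=\operatorname{Max}(B)=\{\qq_1,\dots,\qq_n\}$; (iii) for each $i$ pick a minimal prime $\mathfrak{q}_0\subseteq\qq_i$ of $B$ with $\dim B_{\qq_i}=\dim B/\mathfrak{q}_0$ (possible because $B$, being finite over the catenary ring $A$, is catenary and we may take $\mathfrak{q}_0$ witnessing a maximal chain); (iv) using finiteness of $B/\mathfrak{q}_0$ over $A/\mathfrak{p}_0$ where $\mathfrak{p}_0=\mathfrak{q}_0\cap A$, get $\dim B/\mathfrak{q}_0=\dim A/\mathfrak{p}_0$; (v) use that $A$ is a reduced Cohen--Macaulay (hence unmixed) local ring to conclude $\dim A/\mathfrak{p}_0=\dim A$ for every minimal prime $\mathfrak{p}_0$; (vi) combine to get $\dim B_{\qq_i}=\dim A$ for all $i$.

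The main obstacle I anticipate is step (iii)--(v): justifying cleanly that $\dim B_{\qq_i}$ really equals $\dim B/\mathfrak{q}_0$ for an \emph{appropriately chosen} minimal prime, and that contraction of a minimal prime of $B$ to $A$ is again minimal. The first requires knowing $B$ is catenary and unmixed at $\qq_i$ — which follows because $B$ is finite over the reduced Cohen--Macaulay (hence universally catenary and, localized, unmixed) ring $A$, but one must be a little careful since $B$ itself need not be Cohen--Macaulay. The cleanest route is probably to cite the dimension formula for finite extensions together with unmixedness of Cohen--Macaulay local rings (\cite[Cor.~2.1.4, Thm.~2.1.2]{BH93} or the catenarity/unmixedness statements there), rather than reprove it; this keeps the argument short and avoids grinding through chains of primes by hand.
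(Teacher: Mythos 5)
Your reduction to a finite injective extension $\varphi\colon A\into B$ with $(A,\pp)$ local reduced Cohen--Macaulay, and your endgame via equidimensionality of Cohen--Macaulay local rings, follow the same skeleton as the paper's proof. The one genuine gap is your justification of the step ``$\qq_0\cap A$ is a minimal prime of $A$''. You assert this holds ``since $\varphi$ is injective and finite, contraction sends minimal primes to minimal primes'', but that general principle is false: for $A=k[x]$ and $B=k[x]\times k[x]/(x)$ with the map $a\mapsto(a,\bar a)$, the extension is finite and injective, yet the minimal prime $k[x]\times 0$ of $B$ contracts to the maximal ideal $(x)$ of $A$. What saves the step in your situation is precisely the combination of bifractionality and reducedness, neither of which your written argument invokes at this point: bifractionality gives $B\into Q(B)=Q(A)$, so every regular element of $A$ becomes a unit of $Q(A)$ and hence a regular element of $B$, while reducedness of $A$ makes the zero divisors of $A$ equal to the union of its minimal primes. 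Hence if $\pp_0:=\qq_0\cap A$ were not minimal, prime avoidance would produce $t\in\pp_0$ outside every minimal prime of $A$, i.e.\ $t\in A^\reg$, whence $\varphi(t)\in B^\reg$, contradicting $\varphi(t)\in\qq_0\in\Ass B$. This is exactly the paper's argument. Note also that your preliminary localization at $\pp$ silently uses $Q(A)_\pp=Q(A_\pp)$, which again requires reducedness.

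Once that step is repaired, your route diverges mildly from the paper's in the dimension count: you invoke the dimension formula for the finite extension of domains $A/\pp_0\into B/\qq_0$ over the universally catenary ring $A$, whereas the paper first passes to the completion to split $\wh B$ into a product of local rings and then runs going-up along a maximal chain of primes from $\pp_0$ to $\pp$. Your version is shorter and avoids completion, but step (iii) should be stated more carefully: $\dim B_{\qq_i}$ is the supremum over minimal primes $\qq_0\subset\qq_i$ of $\operatorname{ht}(\qq_i/\qq_0)$, not of $\dim B/\qq_0$ (a chain in $B/\qq_0$ realizing its dimension may terminate at a different maximal ideal $\qq_j$ of the semilocal ring $B$; this is the very issue the paper's completion step is designed to avoid). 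The dimension formula applied to $\qq_i/\qq_0$ over $\pp/\pp_0$ gives $\operatorname{ht}(\qq_i/\qq_0)=\operatorname{ht}(\pp/\pp_0)=\dim A/\pp_0=\dim A$, independent of $i$ and of $\qq_0$, which is what you need.
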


\begin{proof}
Let $x\in X$ be a closed point and $y\in f^{-1}(x)$.
We may return to the affine local situation of the proof of Lemma~\ref{16}. 
Since $A$ is reduced we have $Q(A)_\pp=Q(A_\pp)$.
By exactness of localization, we may assume that $A=(A,\pp)$ is local Cohen--Macaulay and
\begin{equation}\label{61}
\xymat{
A\ar@{^(->}[r]^-\varphi & B\ar@{^(->}[r] & Q(A)
}
\end{equation}
with $\varphi$ a finite extension.
By exactness of completion, $-\otimes_A\wh A$ preserves injectivity and finiteness in \eqref{61} and $Q(A)\otimes_A\wh A\into Q(\wh A)$.
Since the Cohen--Macaulay property commutes with completion (\cite[Cor.~2.1.8]{BH93}), we may assume that $A$ is complete and that $B$ is decomposed as in \eqref{60}.
In particular, $B$ is catenary (see \cite[Thm.~2.1.12]{BH93}).
Let $\qq'\in\Spec B$ be such that $\dim B_\qq=\dim\qq'$.
In particular, $\qq'$ is minimal and hence $\qq'\in\Ass B$.
Then $\pp':=\varphi^{-1}(\qq')$ must be minimal.
Otherwise, there is a $t\in\pp'$ not contained in any minimal prime of $A$ by prime avoidance.
Since $A$ is reduced this means that $t\in A^\reg\cap\pp'$.
Then $t$ becomes a unit in $Q(A)$ and hence $\varphi(t)\in B^\reg$ in contradiction to $\varphi(t)\in\qq'\in\Ass B$.
Since local Cohen--Macaulay rings are equidimensional (see \cite[Thm.~2.1.2.(a)]{BH93}), $\dim\pp'=\dim A$.
Applying the going up theorem to $\qq'$ and a maximal chain of primes between $\pp'$ and $\pp$ gives $\dim A=\dim B_\qq$ and the claim follows.
\end{proof}

\bibliographystyle{amsalpha}
\bibliography{blc}
\end{document}